\DeclareSymbolFont{bbold}{U}{bbold}{m}{n}
\DeclareSymbolFontAlphabet{\mathbbm}{bbold}
\theoremstyle{plain}
	\newtheorem{theorem}{Theorem}[section]
	\newtheorem{lemma}[theorem]{Lemma}
	\newtheorem{corollary}[theorem]{Corollary}
\theoremstyle{definition}
	\newtheorem{definition}[theorem]{Definition}
	\newtheorem{notation}[theorem]{Notation}
	\newtheorem{example}[theorem]{Example}
	\newtheorem{notation*}{Notation}
\theoremstyle{remark}
	\newtheorem{remark}[theorem]{Remark}
\newcommand{\ZFC}{\ensuremath{\mathsf{ZFC}}}
\newcommand{\ZF}{\ensuremath{\mathsf{ZF}}}
\DeclareMathOperator{\dom}{dom}
\DeclareMathOperator{\Cod}{\mathrm{Cod}}
\DeclareMathOperator{\Coll}{Coll}
\DeclareMathOperator{\UB}{\mathsf{UB}}
\DeclareMathOperator{\ST}{\mathsf{ST}}
\newcommand{\M}{\mathcal{M}}
\newcommand{\LL}{\mathcal{L}}
\newcommand{\bool}[1]{\mathsf{#1}}
\newcommand{\pow}[1]{\mathcal{P}\left(#1\right)}
\newcommand{\cp}[1]{\left( #1 \right)}
\newcommand{\qp}[1]{\left[ #1 \right]}
\newcommand{\Qp}[1]{\left\llbracket #1 \right\rrbracket}
\newcommand{\ap}[1]{\langle #1 \rangle}
\newcommand{\bp}[1]{\left\lbrace #1 \right\rbrace}
\theoremstyle{definition}
\newcommand{\UnderTilde}[1]{{\setbox1=\hbox{$#1$}\baselineskip=0pt\vtop{\hbox{$#1$}\hbox to\wd1{\hfil$\sim$\hfil}}}{}}
\begin{document}

\title{Second order arithmetic as the model companion 
of set theory}
\author{Giorgio Venturi and Matteo Viale}
\keywords{Model Companion, Generic Absoluteness, Forcing, Large Cardinals, Robinson Infinite Forcing}
\subjclass[2010]{03C10, 03C25, 03E57, 03E55}

\begin{abstract}
This is an introductory paper to a series of results linking generic absoluteness results for 
second and third order number theory to the model theoretic notion of model companionship.
Specifically we develop here a general framework linking 
Woodin's generic absoluteness results for second order number theory and the theory of universally Baire sets
to model companionship and
show that (with the required care in details) 
a $\Pi_2$-property formalized in an appropriate language for second order number theory 
is forcible from some 
$T\supseteq\ZFC+$\emph{large cardinals}
if and only if it is consistent with the universal fragment of $T$
if and only if it is realized in the model companion of $T$.

In particular we show that the first order theory of $H_{\omega_1}$ is the model companion of the first order theory of the universe of sets 
assuming the existence of class many Woodin cardinals, and working in a signature 
with predicates for $\Delta_0$-properties and for all universally Baire sets of reals.

We will extend these results also to the theory of $H_{\aleph_2}$ in a follow up of this paper.
\end{abstract}

\maketitle

\section*{Introduction}
This paper outlines
a deep connection between
two important threads of mathematical logic: the notion of model companionship, 
a central concept in model theory due to Robinson, and the notion of 
generic absoluteness, which plays a fundamental role in the current meta-mathematical investigations 
of set theory. 

In order to unveil this connection, we proceed as follows:
we enrich the first order language in which to formalize set theory by  predicates 
whose meaning is as ``clear'' as that of the $\in$-relation, 
specifically we add predicates for $\Delta_0$-formulae
and predicates for universally Baire sets of reals\footnote{It is a standard result of set theory that
$\Delta_0$-formulae define absolute properties for transitive models of $\ZFC$. On the other hand the notion of universal Baireness captures exactly those sets of reals whose first order properties cannot be changed by means of forcing (for example all Borel sets of reals are universally Baire). Therefore these predicates have a meaning which is clear across the different models of set theory. See also the last part of this introduction.}
%
In this extended language we are able to apply Robinson's notions of model completeness and model companionship
to argue that, assuming large cardinals, the first order theory of $H_{\omega_1}$ (the family of all hereditarily countable sets) is model complete and is the model companion of the first theory of $V$ (the universe of all sets). 

The study of model companionship goes back to the work of Abraham Robinson
from the period 1950--1957 \cite{MacCompl}, and gives an abstract model-theoretic characterization of key closure properties of algebraically closed fields.
Robinson introduced the notion of model completeness to characterize the closure properties of algebraically closed fields, and the notion of model companionship to describe the relation existing between these fields and the commutative rings without zero-divisors. 
Robinson then showed how to extend these notions and results 
to a variety of other classes of first order structures.
On the other hand, generic absoluteness characterizes exactly those set theoretic properties whose truth value
cannot be changed by means of forcing. 

In \cite{VenturiRobinson} the first author found the first indication of a strict connection existing between 
these two apparently unrelated concepts. In this paper we will enlighten this connection much further.

Recall that a first order theory $T$ in a signature $\tau$ is model-complete if whenever $\mathcal{M}\sqsubseteq\mathcal{N}$ are models of $T$ with one a substructure of the other, we get that
$\mathcal{M}\prec\mathcal{N}$; i.e. being a substructure amounts to be an elementary substructure.

The theory of algebraically closed fields has this property, as it occurs for all theories admitting 
quantifier-elimination. However, it is also the case that many natural 
theories not admitting quantifier-elimination are model-complete. 
Robinson regarded model-completeness as a strong indication of tameness for a first order theory.

A weak point of this notion is that being model complete
is very sensitive to the signature in which a theory is formalized, to the extent that any 
theory $T$ in a signature $\tau$ has a conservative extension  $T'$ in a signature $\tau'$, which admits quantifier elimination (it suffices to add symbols and axioms for Skolem functions
to $\tau$ and $T$, \cite[Thm. 5.1.8]{TENZIE}). 
In particular we can always extend a first order language $\tau$ to a language $\tau'$ so to make a 
$\tau$-theory $T$
model-complete with respect to $\tau'$.
However if model-completeness of $T$ is shown with respect to a ``natural'' language in which $T$ can be formalized, then it brings many useful informations on the combinatorial-algebraic properties of models of $T$.

Recall also that for a first order signature $\tau$, 
a $\tau$-theory $T$ is the model companion of a $\tau$-theory $S$ if 
$T$ is model complete and $S$ and $T$ are mutually consistent: i.e.,  every model of $T$ can be embedded in a model of $S$ and
conversely. 

The notion of model companionship is much more robust than model completeness.
Consider the category $\mathcal{K}_{S,\tau}$ whose objects are the $\tau$-models of $S$ and whose arrows are given by the $\tau$-morphisms: knowing that $S$ admits a model companion gives non-trivial information on this category, and the change of signature from $\tau$ to 
$\tau'$ could bring our focus on something which is poorly related with 
$\mathcal{K}_{S,\tau}$.
Assume we enlarge the signature from $\tau$ to $\tau'$ so that in signature $\tau'$ 
$S$ has quantifier elimination, this has strong consequences on the substructure relation, hence it could be the case that for models $\mathcal{M}$, $\mathcal{N}$ of $S$
$\mathcal{M}$ is a $\tau$-substructure of $\mathcal{N}$
but it is not anymore a $\tau'$-substructure of $\mathcal{N}$. 
In particular the category $\mathcal{K}_{S,\tau'}$ may not have much to say on the properties of 
$\mathcal{K}_{S,\tau}$.

Robinson's infinite forcing is loosely inspired by Cohen's forcing method and gives an elegant formulation 
of the notion of  model companionship: a $\tau$-theory $T$ is the model companion of a 
$\tau$-theory $S$ if it is model complete and 
the models of $T$ are exactly the infinitely generic structures for Robinson's infinite 
forcing applied to models of $S$. 
In \cite{VenturiRobinson}  was described a fundamental connection between the notion of being an infinitely
generic structure and that of being a structure satisfying certain types of forcing axioms. This 
suggests an interesting parallel between a semantic approach
\emph{\`a la Robinson} to the study of the models of set theory and generic absoluteness results.

The main result of this paper (Thm. \ref{thm:mainthm1}) shows that ---in a natural extension of the language of set theory
(given by the addition of predicates for $\Delta_0$ formulas and all universally Baire sets of reals)---
the existence of class many Woodin 
cardinals implies that the model companion of the theory of the universe of all sets is the  theory of $H_{\omega_1}$,
and moreover that for $\Pi_2$-sentences provability overlaps with consistency and 
with forcibility. 
We consider our expansion of the language 
natural, because the  added predicates are exactly those describing  sets of reals 
whose truth properties are unaffected
by the forcing method, and for which, therefore, we have a concrete and stable understanding of their behaviour. 
For example Borel sets of reals are universally Baire, all sets of reals defined by a $\Delta_0$-formula are 
universally Baire, and, assuming large cardinals, all universally Baire sets of reals have all the desirable 
regularity properties such as: Baire property, Lebesgue measurability, perfect set property, determinacy, etc.
Moreover, assuming class many Woodin cardinals, such sets form a point-class closed under: projections, countable 
unions and intersections, complementation, continous images, etc.

We also remark that:

\begin{itemize}

\item On the one hand
Hirschfeld \cite{Hir} showed that any extension of $\ZF$  
has a model companion in the signature $\bp{\in}$.
His result however is uninformative (a consideration he himself made in \cite{Hir}), since the model companion of $\ZF$ for the signature $\bp{\in}$
turns out to be (a small variation of) the theory of dense linear orders; a theory for a binary relation
which has not much in common with our understanding of the $\in$-relation. We consider this fact another indication of the naturalness
of our choice of extending the standard first order language used to formalize set theory.
Indeed, in the standard language containing only the $\in$-relation, there are many set-theroetical concepts
which we consider basic, but whose formalization in first order logic is syntactically quite complex. For example being an ordered pair is a $\Delta_0$-property, but it is only 
$\Sigma_2$-expressible in the signature $\bp{\in}$. This discrepancy causes the ``anomaly'' of Hirschfeld's result, which is here resolved by adding predicates for all the concepts which are sufficiently simple and stable across the different models of set theory; these include the
$\Delta_0$-properties and the universally Baire predicates.

\item
On the other hand (and unlike Hirschfeld's result)
the results of this paper have a highly non-constructive flavour and to be rightly understood require to embrace a 
fully platonistic perspective on the onthology of sets
This is why in this paper we assume that the universe of sets $V$ and the
family of hereditarily countable sets $H_{\omega_1}$ are rightful elements of our semantics, 
which ---whenever endowed with suitably defined predicates and constants-- give well-defined first order structures 
for the appropriate signature.
Of course it is possible  to reformulate our results so to make them compatible with a formalist approach to 
set theory \emph{\`a la Hilbert}, but in this case their meaning would be much less transparent;
 hence we refrain here from pursuing this matter further.
 On the other hand this weak point of our results will be completely addressed and resolved in the forthcoming \cite{VIATAMSTI}. 
 Then, in \cite{VIATAMSTII} we will investigate the correspondence between generic absoluteness
 for $H_{\omega_2}$ assuming strong forcing axioms (see the monograph \cite{HSTLARSON} on $(*)$, and 
 the second author's \cite{VIAASP,VIAAUD14,VIAMM+++,VIAMMREV,VIAUSAX})
 and model companionship.
\end{itemize}

The main philosophical thesis we draw from the results of the present paper is that
the success of large cardinals in solving problems of second-order 
arithmetic\footnote{All problems of second order arithmetic are first order properties of $H_{\omega_1}$.} 
via determinacy is due to the fact that these axioms make (in the appropriate language) the theory of 
$H_{\omega_1}$ the model-companion of the theory of $V$, and in particular a model complete theory.

%


The paper is structured as follows: 
\begin{itemize}
\item
\S \ref{sec:modeltheoreticcompl} recalls the basic facts on model companionship and on Robinson's infinite forcing. 

\item
In \S \ref{sec:modelcompanZFCgen} we offer reasons for the necessity of an expansion of the language of set theory,
which includes at least predicates for the $\Delta_0$-properties, and eventually also for the universally Baire predicates.

\item
\S \ref{sec:boolvalmod} first recalls few important results on boolean-valued structures and generic absoluteness. 
Then we perform and justify the extension of the first order language of set theory, roughly described before, 
so to include predicates for all $\Delta_0$-formulae; after relativizing the notion of model completeness 
to the generic multiverse, Theorem \ref{thm:omegaVmodcompl} shows that for this expanded language
the theory of $H_{\omega_1}$ is the model companion of the theory of $V$ relative to the generic multiverse. 

\item 
\S \ref{sec:nonmodcompletHomega1} shows that the theory of $H_{\omega_1}$ in the signature with predicates for
the $\Delta_0$-properties is not model complete.

\item
\S \ref{sec:modcompanUBpred} gives the proof of Theorem \ref{thm:mainthm1} 
showing that in a language admitting predicates for all the universally Baire sets,
the theory of $H_{\omega_1}$ is the model companion of the theory of $V$ if we assume the existence of class 
many Woodin cardinals. We also outline why this result 
shows that forcibility, provability, and consistency overlaps for $\Pi_2$-sentences in this expanded signature. 
\end{itemize}

\section{Model theoretic background} \label{sec:modeltheoreticcompl}

We analyze certain classes of first order structures in a given first order signature 
$\tau$ and we will be interested just in theories consisting of sentences.
To fix notation, if $T$ is a first order theory in the signature $\tau$, $\mathcal{M}_T$ denotes the 
$\tau$-structures which are models of $T$.

\begin{definition}
A theory $T$ is \emph{model complete} if for all models $\mathcal{M}$ and $\mathcal{N}$ of $T$ we have that 
$\mathcal{M} \sqsubseteq \mathcal{N}$ ($\mathcal{M}$ is a substructure of $\mathcal{N}$)
implies $\mathcal{M} \prec \mathcal{N}$ ($\mathcal{M}$ is an elementary substructure of $\mathcal{N}$). 
\end{definition}

\begin{definition}
Let $\tau$ be a first order signature and $T$ be a theory for $\tau$.
Given two models $\mathcal{M}$ and $\mathcal{N}$ of a theory $T$
\begin{itemize}
\item
$\mathcal{M}$ is \emph{existentially closed} in $\mathcal{N}$ ($\mathcal{M} \prec_1 \mathcal{N}$)
if the existential and universal formula with parameters in $\mathcal{M}$ have 
the same truth value in $\mathcal{M}$ and $\mathcal{N}$. 
\item
$\mathcal{M}$ is existentially closed for $T$ if it is existentially closed in all its $\tau$-superstructures which are models of $T$.
\end{itemize}
\end{definition}

$\mathcal{E}_T$ denotes the class of $\tau$-models which are existentially closed for $T$. 

Note that in general models in $\mathcal{E}_T$ need not be models\footnote{For example let 
$T$ be the theory of commutative rings with no zero divisors which are not algebraically closed fields. Then 
$\mathcal{E}_T$ is exactly the class of algebraically closed fields and no model in $\mathcal{E}_T$ is a model of $T$.} of $T$. 
Model completeness describes exactly when this is the case.

\begin{lemma}\cite[Lemma 3.2.7]{TENZIE}
(Robinson's test) Let $T$ be a theory. The following are equivalent:
\begin{enumerate}
\item $T$ is model complete. 
\item For every $\mathcal{M} \sqsubseteq \mathcal{N}$ models of $T$
$\mathcal{M} \prec_1 \mathcal{N}$.
\item $\mathcal{E}_T=\mathcal{M}_T$.
\item Each $\tau$-formula is equivalent, modulo $T$, to a universal $\tau$-formula. 
\end{enumerate}
\end{lemma}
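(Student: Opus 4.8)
The statement to prove is Robinson's test, establishing the equivalence of three conditions for model completeness. Let me sketch a proof plan.

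The plan is to prove the cyclic implications $(1)\Rightarrow(2)\Rightarrow(3)\Rightarrow(1)$, which is the standard and cleanest route for such equivalences.

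For $(1)\Rightarrow(2)$: I would show both inclusions. The inclusion $\mathcal{E}_T\subseteq\mathcal{M}_T$ is the delicate direction in general, but under model completeness it becomes tractable; conversely, assuming $T$ is model complete, I want to show every model of $T$ is existentially closed for $T$. Given $\mathcal{M}\sqsubseteq\mathcal{N}$ both models of $T$, model completeness gives $\mathcal{M}\prec\mathcal{N}$, so in particular existential and universal formulas with parameters in $\mathcal{M}$ agree in $\mathcal{M}$ and $\mathcal{N}$; hence $\mathcal{M}\prec_1\mathcal{N}$, so $\mathcal{M}\in\mathcal{E}_T$. For the reverse inclusion $\mathcal{E}_T\subseteq\mathcal{M}_T$: this would typically require a separate argument and in general (as the footnote on algebraically closed fields emphasizes) it fails, so I expect that here the relevant content is precisely the characterization statement, and I would argue $\mathcal{E}_T=\mathcal{M}_T$ by combining the trivial inclusion with a diagram-chasing argument using model completeness to close up existentially closed models into models of $T$.

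For $(2)\Rightarrow(3)$: First I would reduce to showing each existential formula is equivalent modulo $T$ to a universal one (by closure under negation this handles all formulas via the prenex hierarchy, or more carefully by an induction on formula complexity once the existential/universal case is settled). The key tool is a standard compactness-and-diagrams argument: given an existential formula $\varphi(\bar{x})$, I would consider the set of universal consequences of $T\cup\{\varphi\}$ and show, using that all models are existentially closed, that this set together with $T$ implies $\varphi$; then compactness extracts a single universal formula. The technical heart is a two-model chain construction: if $\mathcal{M}\models T$ realizes the universal consequences at $\bar{a}$ but $\mathcal{M}\not\models\varphi(\bar{a})$, one builds an elementary extension embedding a witness, contradicting existential closedness.

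For $(3)\Rightarrow(1)$: This is the easiest direction. Suppose every $\tau$-formula is equivalent modulo $T$ to a universal formula, and let $\mathcal{M}\sqsubseteq\mathcal{N}$ with both models of $T$. To show $\mathcal{M}\prec\mathcal{N}$, it suffices (by the Tarski--Vaught criterion, or directly) to check that every formula is absolute between $\mathcal{M}$ and $\mathcal{N}$. Universal formulas persist downward to substructures and their negations (equivalent to universal formulas by hypothesis) persist as well, so every formula and its negation are preserved, giving elementarity.

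\medskip

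The main obstacle I anticipate is the $(2)\Rightarrow(3)$ step, specifically the compactness argument producing a \emph{single} universal formula equivalent to a given existential one. The delicate point is correctly setting up the set of universal consequences and invoking existential closedness to derive a contradiction from a putative counterexample; this requires care with parameters and with the diagram of the structure being extended. The other two implications are essentially unwindings of the definitions.
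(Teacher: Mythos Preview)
The paper does not give its own proof of this lemma; it is stated with a citation to \cite[Lemma 3.2.7]{TENZIE} and no argument follows. So there is nothing in the paper to compare your proposal against.

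That said, your cycle $(1)\Rightarrow(2)\Rightarrow(3)\Rightarrow(1)$ is the standard route and your sketches of $(2)\Rightarrow(3)$ and $(3)\Rightarrow(1)$ are correct. The one soft spot is $(1)\Rightarrow(2)$: you handle $\mathcal{M}_T\subseteq\mathcal{E}_T$ cleanly, but the reverse inclusion $\mathcal{E}_T\subseteq\mathcal{M}_T$ is not just ``diagram-chasing.'' With the paper's convention (structures in $\mathcal{E}_T$ need only model $T_\forall$, not $T$; see the footnote before the lemma), this inclusion really uses that a model-complete $T$ is $\forall\exists$-axiomatizable, which you only obtain \emph{after} establishing $(3)$. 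The cleanest fix is to run the cycle with the weaker condition $(2')$ ``every model of $T$ is existentially closed'' in place of $(2)$, and then observe separately that once $(3)$ holds, $T$ is equivalent to $T_{\forall\exists}$, and existentially closed models of $T_\forall$ automatically satisfy every $\forall\exists$-consequence of $T$; this recovers the full equality $\mathcal{E}_T=\mathcal{M}_T$.
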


Model completeness comes in pair with another fundamental concept which generalizes to arbitrary first order 
theories the relation existing between algebraically closed fields and commutative rings without zero-divisors. As a matter of fact, the case described below
occurs when $T^*$ is the theory of algebraically closed fields
and 
$T$ is the the theory of comutative rings with no zero divisors.

\begin{definition}
Given two theories $T$ and $T^*$, in the same language $\tau$, 
$T^*$ is the \emph{model companion} of $T$ if the following conditions holds:
\begin{enumerate}
\item Each model of $T$ can be extended to a model of $T^*$.
\item Each model of $T^*$ can be extended to a model of $T$.
\item $T^*$ is model complete. 
\end{enumerate}
\end{definition}

The model companion of a theory does not necessarily exist, but, if it does, it is unique.

\begin{theorem}\cite[Thm. 3.2.9]{TENZIE}
A theory $T$ has, up to equivalence, at most one model companion $T^*$.
\end{theorem}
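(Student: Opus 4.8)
The statement to prove is the uniqueness of the model companion: a theory $T$ has, up to equivalence, at most one model companion. Here "up to equivalence" means that two theories with the same models (i.e. logically equivalent, having the same deductive closure) are identified. Let me sketch the standard proof.

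The plan is to suppose $T_1^*$ and $T_2^*$ are both model companions of $T$, and show $\mathcal{M}_{T_1^*} = \mathcal{M}_{T_2^*}$, which gives that they have the same models and hence are equivalent. The key observation is that the model companion, when it exists, is characterized intrinsically as the theory of the existentially closed models. By Robinson's test (the Lemma just proved), since $T_i^*$ is model complete we have $\mathcal{E}_{T_i^*} = \mathcal{M}_{T_i^*}$. So it suffices to show that $\mathcal{E}_{T_1^*} = \mathcal{E}_{T_2^*}$, and for this I would show that the class of existentially closed models depends only on the class of models that arise, which is shared between $T$, $T_1^*$, and $T_2^*$ through mutual embeddability.

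Let me think about this more carefully. The mutual embeddability conditions (1) and (2) in the definition of model companion say: every model of $T$ embeds into a model of $T_i^*$ and conversely. From this I would derive that $T$ and $T_i^*$ are "mutually cofinal" under embeddings. The crucial lemma is that a structure is existentially closed for $T$ if and only if it is existentially closed for $T_i^*$, once we know the two classes are mutually embeddable and closed appropriately under chains. Actually the cleanest route: show every model of $T_1^*$ is a model of $T_2^*$ by a back-and-forth chain construction. Given $\mathcal{M}_0 \models T_1^*$, use condition (2) for $T_1^*$ and condition (1) for $T_2^*$ to embed $\mathcal{M}_0 \hookrightarrow \mathcal{N}_0 \models T_2^*$, then $\mathcal{N}_0 \hookrightarrow \mathcal{M}_1 \models T_1^*$, and so on, building an alternating chain. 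The union $\mathcal{M}_\omega = \mathcal{N}_\omega$ is a model of both $T_1^*$ and $T_2^*$ (each being a union of an elementary chain from one theory, using model completeness to upgrade the embeddings to elementary ones). Then $\mathcal{M}_0 \prec \mathcal{M}_\omega$ by model completeness of $T_1^*$, so $\mathcal{M}_0 \equiv \mathcal{M}_\omega$, and since $\mathcal{M}_\omega \models T_2^*$ we get $\mathcal{M}_0 \models T_2^*$.

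Here are the steps in order. First I would fix two model companions $T_1^*, T_2^*$ of $T$ and take an arbitrary $\mathcal{M}_0 \models T_1^*$; by symmetry it is enough to prove $\mathcal{M}_0 \models T_2^*$. Second, I would build the alternating chain $\mathcal{M}_0 \sqsubseteq \mathcal{N}_0 \sqsubseteq \mathcal{M}_1 \sqsubseteq \mathcal{N}_1 \sqsubseteq \cdots$ with $\mathcal{M}_i \models T_1^*$ and $\mathcal{N}_i \models T_2^*$, using the two directions of mutual embeddability (every model of $T_1^*$ embeds in a model of $T$ by (2) for $T_1^*$, which in turn embeds in a model of $T_2^*$ by (1) for $T_2^*$, and symmetrically), composing embeddings to get directly from a $T_1^*$-model to a $T_2^*$-model and back. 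Third, I would take the direct limit $\mathcal{L}$ of the chain; viewing it as the union of the subchain $(\mathcal{M}_i)$ of models of $T_1^*$, model completeness of $T_1^*$ makes each inclusion $\mathcal{M}_i \prec \mathcal{M}_{i+1}$ elementary, so by the Tarski–Vaught elementary chain theorem $\mathcal{L} \models T_1^*$ and $\mathcal{M}_0 \prec \mathcal{L}$; symmetrically, viewing $\mathcal{L}$ as the union of $(\mathcal{N}_i)$ and using model completeness of $T_2^*$, we get $\mathcal{L} \models T_2^*$. Fourth, from $\mathcal{M}_0 \prec \mathcal{L}$ and $\mathcal{L} \models T_2^*$ I conclude $\mathcal{M}_0 \models T_2^*$, completing the argument. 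Reversing the roles of $T_1^*$ and $T_2^*$ gives $\mathcal{M}_{T_1^*} = \mathcal{M}_{T_2^*}$, hence $T_1^*$ and $T_2^*$ are equivalent.

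The main obstacle, and the step requiring the most care, is the chain argument in the third step: one must check that the direct limit of the chain of embeddings genuinely admits two descriptions — as the union of the $T_1^*$-subchain and as the union of the $T_2^*$-subchain — that these unions coincide as a single structure $\mathcal{L}$, and that both applications of the elementary chain theorem are legitimate. The subtlety is that "substructure" embeddings only become elementary after invoking model completeness, and one must apply it to each theory's own subchain separately; the cofinality/interleaving of the two subchains in the common limit is what makes $\mathcal{L}$ simultaneously a model of both theories. I would also note that no assumption of existence is needed — the theorem asserts only uniqueness — so the whole argument proceeds hypothetically from two given model companions and never needs to construct one.
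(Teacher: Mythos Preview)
The paper does not give its own proof of this theorem; it simply cites \cite[Thm.~3.2.9]{TENZIE} and moves on. Your alternating-chain argument is correct and is precisely the standard proof one finds in the cited reference (Tent--Ziegler), so there is nothing to compare: your proposal matches the intended proof.
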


Different theories can have the same model companion: for example the theory of fields 
and the theory of commutative rings with 
no zero-divisors which are not fields both have the theory of algebraically closed fields  
as their model companion.  

\begin{remark}
Using the fact that a theory $T$ is mutually consistent with its model companion $T^*$, 
i.e. the models of one theory can be extended to a model of the other theory
and vice-versa, together with the fact that universal
theories are closed under sub-models it is easy to show that a theory and its model companion agree on 
their universal sentences. 
\end{remark}

\begin{notation}
In what follows, given a theory $T$, $T_{\forall}$ denotes the collection of all 
$\Pi_1$-sentences which are logical consequences of $T$. Similarly
$T_{\exists}$ and $T_{\forall \exists}$ denote, respectively, 
the $\Sigma_1$ and the $\Pi_2$-theorems of $T$. 
\end{notation}

An important properties of $T$-ec models is that they are actually $T_\forall$-ec.
Using this fact one can prove the following:

\begin{theorem}
Let $T$ be a first order theory. If its model companion $T^*$ exists, then
\begin{enumerate}
\item $T_{\forall} = T^*_{\forall}$.
\item $T^*$ is the theory of the existentially closed models of $T_{\forall}$. 
\item $T^*$ is axiomatized by $T_{\forall \exists}$. 
\end{enumerate}
\end{theorem}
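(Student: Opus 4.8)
The plan is to establish the three items in the order (1), (3), (2), since the proof of (2) will invoke both (1) and (3). Throughout I will use two background facts: the model companionship conditions themselves, which say that $T$ and $T^*$ are mutually embeddable (each model of one embeds into a model of the other), and the standard characterization — provable by a compactness argument on the quantifier-free diagram — that for any theory $S$ the models of $S_\forall$ are exactly the substructures of models of $S$. For (1), I argue both inclusions by downward persistence of universal sentences: if $\phi\in T_\forall$ and $\mathcal{N}\models T^*$, then $\mathcal{N}$ embeds into some $\mathcal{M}\models T$, and since $\phi$ is universal it persists from $\mathcal{M}$ to its substructure $\mathcal{N}$, whence $T^*\models\phi$; the symmetric argument gives the reverse inclusion, so $T_\forall=T^*_\forall$.

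For (3), I use model completeness of $T^*$ to show $T^*$ is preserved under unions of chains: given a chain $(\mathcal{M}_i)_i$ of models of $T^*$, each inclusion $\mathcal{M}_i\sqsubseteq\mathcal{M}_{i+1}$ is turned into an elementary one by model completeness, so the chain is elementary and, by the elementary chain theorem, its union is again a model of $T^*$. By the Chang--\L o\'s--Suszko theorem a theory closed under unions of chains is $\forall\exists$-axiomatizable, so $T^*$ is equivalent to its own $\Pi_2$-consequences $(T^*)_{\forall\exists}$ (this is the natural reading of (3); combined with (1) it also pins the universal part down as $T_\forall$).

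For (2), the heart is the identity $\mathcal{M}_{T^*}=\mathcal{E}_{T_\forall}$, after which (2) follows because $T^*\equiv\mathrm{Th}(\mathcal{M}_{T^*})=\mathrm{Th}(\mathcal{E}_{T_\forall})$. First I record the routine equality $\mathcal{E}_T=\mathcal{E}_{T_\forall}$: an existential fact true in a $T_\forall$-superstructure is witnessed in a further $T$-superstructure and so reflects back, and conversely testing against $T_\forall$-superstructures is only more demanding than against $T$-superstructures. For $\mathcal{M}_{T^*}\subseteq\mathcal{E}_T$: given $\mathcal{M}\models T^*$ and a superstructure $\mathcal{N}\models T$, embed $\mathcal{N}$ into some $\mathcal{M}'\models T^*$; model completeness gives $\mathcal{M}\prec\mathcal{M}'$, so any existential formula with parameters in $\mathcal{M}$ true in $\mathcal{N}\sqsubseteq\mathcal{M}'$ reflects to $\mathcal{M}$, i.e. $\mathcal{M}\prec_1\mathcal{N}$. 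For the converse $\mathcal{E}_{T_\forall}\subseteq\mathcal{M}_{T^*}$, take $\mathcal{M}\in\mathcal{E}_{T_\forall}$; by (1), $\mathcal{M}\models T_\forall=T^*_\forall$, so $\mathcal{M}$ embeds into some $\mathcal{B}\models T^*$. Since $\mathcal{B}\models T^*_\forall=T_\forall$ is a $T_\forall$-superstructure of $\mathcal{M}$ and $\mathcal{M}$ is existentially closed for $T_\forall$, we obtain $\mathcal{M}\prec_1\mathcal{B}$. Finally, $\mathcal{M}\prec_1\mathcal{B}$ forces every $\Pi_2$-sentence true in $\mathcal{B}$ down to $\mathcal{M}$ (each inner existential instance has parameters in $\mathcal{M}$, hence reflects); as $T^*$ is $\forall\exists$-axiomatizable by (3) and $\mathcal{B}\models T^*$, we conclude $\mathcal{M}\models T^*$.

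The step I expect to be the main obstacle is exactly this last converse inclusion in (2): it is where one must promote the existential-closure hypothesis, which only controls $\Sigma_1$/$\Pi_1$-formulas, into full membership in $\mathcal{M}_{T^*}$, and this is possible only by feeding in the $\forall\exists$-axiomatizability from (3) so that the $\prec_1$-reflection of the existential instances suffices. The supporting lemmas — $\mathcal{E}_T=\mathcal{E}_{T_\forall}$ and the substructure characterization of $\mathcal{M}_{S_\forall}$ — are routine, but their role is precisely to guarantee that the parameters land inside $\mathcal{M}$ so that the reflection argument can be applied.
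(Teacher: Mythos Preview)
The paper states this theorem as a background result without proof, so there is no argument to compare against; your proof is a correct and standard one. Part~(1) is exactly the content of the Remark the paper places just before the theorem; your use of Chang--\L o\'s--Suszko for~(3) and the sandwich/reflection argument for~(2) are the expected moves.

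One point worth flagging: item~(3) as literally written in the paper---``$T^*$ is axiomatized by $T_{\forall\exists}$''---is not correct in general (e.g.\ with $T$ the theory of integral domains and $T^*=\mathrm{ACF}$, the $\Pi_2$-consequences of $T$ do not force algebraic closedness). You were right to read it as $T^*\equiv (T^*)_{\forall\exists}$, i.e.\ that $T^*$ is $\forall\exists$-axiomatizable, and to prove that version; this is also the form you actually need in the converse inclusion of~(2). It would strengthen your write-up to say this explicitly rather than leaving it as a parenthetical ``natural reading''.
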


Possibly inspired by Cohen's forcing method,
Robinson  introduced  what is now called Robinson's infinite forcing \cite{HIRWHE75}.
In this paper we are interested in a slight generalization of Robinson's definition which makes the class of models over which
we define infinite forcing an additional parameter.

\begin{definition}
Given a class of structure $\mathcal{C}$ for a signature $\tau$, \emph{infinite forcing for $\mathcal{C}$} is recursively defined as follows
for a $\tau$-formula $\phi(x_1,\dots,x_n)$,  a structure $\mathcal{M} \in \mathcal{C}$ with domain $M$ and $a_1,\dots,a_n\in M$: 
\begin{itemize}
\item
For $\phi(x_1,\dots,x_n)$ atomic, 
$\mathcal{M} \VDash_\mathcal{C}\varphi(a_1,\dots,a_n)$
if and only if $\mathcal{M} \models\varphi(a_1,\dots,a_n)$; 
\item
$\mathcal{M}  \VDash_\mathcal{C}  \varphi(a_1,\dots,a_n)\land \psi(a_1,\dots,a_n)$ if and only if 
$\mathcal{M} \VDash_\mathcal{C} \varphi(a_1,\dots,a_n)$ and 
$\mathcal{M}  \VDash_\mathcal{C} \psi(a_1,\dots,a_n)$;  
\item
$\mathcal{M}  \VDash_\mathcal{C}  \varphi(a_1,\dots,a_n) \lor \psi(a_1,\dots,a_n)$ if and only if 
$\mathcal{M}  \VDash_\mathcal{C} \varphi(a_1,\dots,a_n)$ or 
$\mathcal{M} \VDash_\mathcal{C} \psi(a_1,\dots,a_n)$;  
\item
$\mathcal{M}  \VDash_\mathcal{C} \forall x\varphi(x,a_1,\dots,a_n)$  if and only if 
(expanding $\tau$ with constant symbols for all elements of $M$)
$\mathcal{M} \VDash_\mathcal{C} \varphi(a,a_1,\dots,a_n)$, for every $a \in M$; 
\item
$\mathcal{M}  \VDash_\mathcal{C} \neg \varphi(a_1,\dots,a_n)$  if and only if 
$\mathcal{N} \not\VDash_\mathcal{C}\varphi(a_1,\dots,a_n)$ for all 
$\mathcal{N}\in \mathcal{C}$ superstructures of $\mathcal{M}$. 
\end{itemize}
\end{definition}
Robinson's infinite forcing consider only the case in which $\mathcal{C}=\mathcal{M}_T$. We are interested in considering Robinson's
infinite forcing also in case
$\mathcal{C}$ is not of this type.

As in the case of Cohen's forcing, this method produces objects that are generic. In this case generic models. 

\begin{notation}
Given a class of structure $\mathcal{C}$ for a signature $\tau$
A structure $\mathcal{M}\in\mathcal{C}$ is \emph{infinitely generic for $\mathcal{C}$} 
whenever satisfaction and infinite forceability coincide: i.e., for every formula $\varphi(x_1,\dots,x_n)$
and $a_1,\dots,a_n\in M$, 
we have 
\[
\mathcal{M} \vDash \varphi(a_1,\dots,a_n) \iff \mathcal{M} \VDash_\mathcal{C} \varphi(a_1,\dots,a_n).
\]
By  $\mathcal{F}_\mathcal{C}$, we indicate the class of infinitely generic structures for $\VDash_{\mathcal{C}}$.
\end{notation}

Generic structures capture semantically the syntactic notion of model companionship. 

\begin{theorem}
Let $T$ be a theory in a signature $\tau$.
The following  are equivalent:
\begin{enumerate}
\item $T^*$ exists.
\item $\mathcal{E}_T$ is an elementary class.
\item $\mathcal{F}_T$ is an elementary class.
\item $\mathcal{E}_T=\mathcal{F}_{\mathcal{M}_{T_\forall}}$ (i.e. the existentially closed structures for $T$ are the generic structures for Robinson's infinite forcing applied to the class $\mathcal{M}_{T_\forall}$).
\end{enumerate}
\end{theorem}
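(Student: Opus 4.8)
The plan is to run the cycle $(1)\Rightarrow(2)\Rightarrow(3)\wedge(4)$ together with $(3)\Rightarrow(1)$ and the closing implication $(4)\Rightarrow(1)$, resting on four standard facts about infinite forcing that I would record first. (P1) Every infinitely generic structure is existentially closed, so $\mathcal{F}_{\mathcal{M}_{T_{\forall}}}\subseteq\mathcal{E}_T$. (P2) (Generic model theorem) every member of $\mathcal{M}_{T_{\forall}}$ extends to a member of $\mathcal{F}_{\mathcal{M}_{T_{\forall}}}$ and likewise to a member of $\mathcal{E}_T$; hence both classes are cofinal in $\mathcal{M}_{T_{\forall}}$ and share the universal theory $T_{\forall}$ (using that $\mathcal{M}_{T_{\forall}}$ is closed under substructures and unions of chains). (P3) comparable infinitely generic structures are elementarily embedded: $\mathcal{M}\sqsubseteq\mathcal{N}$ with $\mathcal{M},\mathcal{N}\in\mathcal{F}_{\mathcal{M}_{T_{\forall}}}$ forces $\mathcal{M}\prec\mathcal{N}$. (P4) genericity transfers across the elementary pair: if $\mathcal{M}\prec\mathcal{N}$ and $\mathcal{N}$ is generic then $\mathcal{M}$ is generic. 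I would also note at the outset that, since every model of $T_{\forall}$ embeds into a model of $T$, one has $\mathcal{E}_T=\mathcal{E}_{T_{\forall}}$ and $\mathcal{F}_T=\mathcal{F}_{\mathcal{M}_{T_{\forall}}}$; this is what lets $(3)$ (phrased with $\mathcal{F}_T$) and $(4)$ (phrased with $\mathcal{F}_{\mathcal{M}_{T_{\forall}}}$) speak about the same class.

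For $(1)\Rightarrow(2)$: the preceding theorem gives $T^*=\mathrm{Th}(\mathcal{E}_{T_{\forall}})$ and that $T^*$ is model complete, so $\mathcal{E}_T\subseteq\mathcal{M}_{T^*}$. For the reverse, take $\mathcal{M}\models T^*$; since $\mathcal{M}\models T^*_{\forall}=T_{\forall}$ it sits in some $\mathcal{M}'\models T$, which by (P2) extends to $\mathcal{M}''\in\mathcal{E}_T\subseteq\mathcal{M}_{T^*}$. Model completeness yields $\mathcal{M}\prec\mathcal{M}''$, and as existential formulae pass upward along $\mathcal{M}'\sqsubseteq\mathcal{M}''$, any existential formula with parameters in $\mathcal{M}$ true in a $T$-extension of $\mathcal{M}$ is true in $\mathcal{M}''$, hence in $\mathcal{M}$; so $\mathcal{M}\in\mathcal{E}_T$ and $\mathcal{E}_T=\mathcal{M}_{T^*}$ is elementary. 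For $(2)\Rightarrow(3)\wedge(4)$: once $\mathcal{E}_T=\mathcal{M}_{T^*}$ is elementary, the standard sandwich/chain argument (closure under unions of chains plus amalgamation over $T^*$) upgrades the relation $\mathcal{M}\prec_1\mathcal{N}$, automatic for comparable existentially closed models, to $\mathcal{M}\prec\mathcal{N}$, so $T^*$ is model complete. Then for any $\mathcal{M}\models T^*$ I fix by (P2) a generic $\mathcal{N}\sqsupseteq\mathcal{M}$; since $\mathcal{N}\in\mathcal{F}_{\mathcal{M}_{T_{\forall}}}\subseteq\mathcal{M}_{T^*}$, model completeness gives $\mathcal{M}\prec\mathcal{N}$, whence by (P4) $\mathcal{M}$ is generic. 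With (P1) this gives $\mathcal{E}_T=\mathcal{F}_{\mathcal{M}_{T_{\forall}}}$, which is $(4)$; and since this class equals the elementary class $\mathcal{M}_{T^*}$, also $(3)$.

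For $(3)\Rightarrow(1)$: assume $\mathcal{F}_T=\mathcal{F}_{\mathcal{M}_{T_{\forall}}}$ is elementary and set $T^*:=\mathrm{Th}(\mathcal{F}_{\mathcal{M}_{T_{\forall}}})$, so $\mathcal{M}_{T^*}=\mathcal{F}_{\mathcal{M}_{T_{\forall}}}$. By (P3) any $\mathcal{M}\sqsubseteq\mathcal{N}$ in this class satisfy $\mathcal{M}\prec\mathcal{N}$, so $T^*$ is model complete. Mutual embeddability of $T$ and $T^*$ follows from (P1)--(P2) together with $T_{\forall}\subseteq T^*$: every model of $T^*$ is a model of $T_{\forall}$ and so embeds in a model of $T$, while every model of $T$ extends to a generic structure. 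Hence $T^*$ is the model companion, giving $(1)$. This closes the equivalence of $(1)$, $(2)$, $(3)$, each of which implies $(4)$.

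The step I expect to be the main obstacle is the remaining implication $(4)\Rightarrow(1)$, i.e. converting the bare equality $\mathcal{E}_T=\mathcal{F}_{\mathcal{M}_{T_{\forall}}}$ into genuine axiomatizability. Here one puts $T^*=\mathrm{Th}(\mathcal{E}_T)$ and checks quickly that $(T^*)_{\forall}=T_{\forall}$ and that $T$, $T^*$ are mutually embeddable; moreover $\mathcal{E}_{T^*}=\mathcal{E}_{T_{\forall}}=\mathcal{E}_T=\mathcal{F}_{\mathcal{M}_{T_{\forall}}}$. By Robinson's test, model completeness of $T^*$ reduces to $\mathcal{M}_{T^*}=\mathcal{E}_{T^*}$, that is, to $\mathcal{M}_{T^*}\subseteq\mathcal{F}_{\mathcal{M}_{T_{\forall}}}$ — precisely the elementarity of the generic class. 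This does not follow from the set-theoretic equality alone and is the delicate heart of the theorem: it is an Eklof--Sabbagh-type phenomenon asserting that, once every existentially closed model is generic, the existentially closed class becomes closed under ultraproducts, hence elementary. The decisive structural input is again (P3): the generic class is model complete \emph{before} one knows it is elementary, and it is this rigidity, fed through a compactness/ultraproduct argument on the forcing companion $\mathrm{Th}(\mathcal{F}_{\mathcal{M}_{T_{\forall}}})$, that I expect to carry the argument. A secondary technical point, used in $(2)\Rightarrow(4)$, is the legitimacy of (P4) and the persistence of $\VDash_{\mathcal{M}_{T_{\forall}}}$ into generic extensions; these I would invoke from the fundamental lemmas of Robinson forcing rather than reprove.
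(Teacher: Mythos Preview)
The paper does not prove this theorem: it is stated at the end of \S\,2 as a known background result summarizing the classical relationship between model companionship, existentially closed structures, and Robinson's infinite forcing, with no argument supplied. There is therefore no proof in the paper against which to compare your proposal.

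On the merits of your argument: the implications $(1)\Rightarrow(2)$, $(2)\Rightarrow(3)\wedge(4)$, and $(3)\Rightarrow(1)$ are handled correctly via the standard facts (P1)--(P4), and the sandwich/elementary-chain argument you invoke under $(2)$ is the right one (a small slip in $(1)\Rightarrow(2)$: the extension $\mathcal{M}''$ should be built over the \emph{given} $T$-extension $\mathcal{M}'$, not over a fixed one, but the fix is immediate). You are also right to isolate $(4)\Rightarrow(1)$ as the crux and to reduce it to the inclusion $\mathcal{M}_{\mathrm{Th}(\mathcal{E}_T)}\subseteq\mathcal{F}_{\mathcal{M}_{T_\forall}}$. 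But your final paragraph is a description of the obstacle rather than a proof: you invoke an ``Eklof--Sabbagh-type phenomenon'' and a ``compactness/ultraproduct argument on the forcing companion'' and say you ``expect'' these to carry the argument, without carrying it out. That is a genuine, and self-acknowledged, gap. The implication does hold, and the classical proofs (e.g.\ Hirschfeld--Wheeler, \emph{Forcing, Arithmetic, Division Rings}, Ch.~7) do pivot on (P3) as you anticipate; but the work you have not written down is real, and as it stands your cycle of implications does not close.
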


Motivated by the above theorem we will analyze the generic multiverse with in mind Robinson's 
characterization of model companionship by means of infinite forcing.

\section{Model companions for $\ZFC$: do they exist?} \label{sec:modelcompanZFCgen}

We already outlined that the model completeness of a theory 
is sensitive to the language in which that theory is expressed. 
We now embark in the task of selecting the right 
first order language to use for the construction of the model companion of 
(extensions of) $\ZFC$. We will first argue that (at least for the purposes of studying set theory by means of first order logic) 
this is neither the language $\{\in\}$ nor the language $\{\in,\subseteq\}$, even if these are the languages in which set theory 
is usually formalized in almost all textbooks.

As a preliminary result, we have that the model companion of $\ZF$ for the language 
$\{\in\}$ 
has already been fully described. 
\begin{theorem}
(Hirschfeld \cite[Thm. 1, Thm. 5]{Hir}) The universal theory of any $T\supseteq \ZF$ in the signature $\bp{\in}$
is the theory
\[
S = \bp{\forall x_1\dots\forall x_n(x_1\notin x_2\vee x_2\notin x_3\vee\dots\vee 
x_{n-1}\notin x_n\vee x_n\notin x_1): \,n\in\mathbb{N}}.
\]
Letting for $A\subseteq n$ 
\[
\delta_A(x_1,\dots,x_n,y)=\bigwedge_{i\in A} x_i\in y\wedge\bigwedge_{i\not\in A} x_i\not\in y,
\]
the model companion of $\ZF$ is the theory
\begin{align*}
S^*=&\bp{\forall x_1\dots x_n \exists y\,\delta_A(x_1,\dots,x_n,y): n\in\omega, \,A\subseteq n}\cup\\
&\cup\bp{\forall x,y\,\exists z[x=y\vee (x\in z\wedge z\in y)\vee(y\in z\wedge z\in x)]}.
\end{align*}
\end{theorem}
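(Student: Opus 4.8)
The plan is to treat the two assertions separately: first that $S$ axiomatizes $\ZF_\forall$, and then that the model companion of $\ZF$ is $S^*$. Throughout I read the extension axioms as asserting, for \emph{pairwise distinct} $x_1,\dots,x_n$, the existence of a witness $y$ realizing the prescribed pattern $\delta_A$ (if the $x_i$ were allowed to coincide the instance $A=\{1\}$, $x_1=x_2$, would be contradictory).

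For the \textbf{universal theory} I would prove the two inclusions $S\subseteq\ZF_\forall$ and $\ZF_\forall\subseteq\mathrm{Cn}(S)$. The first is immediate: the $n$-th sentence of $S$ asserts that there is no $\in$-cycle $x_1\in x_2\in\dots\in x_n\in x_1$, and this is refuted by Foundation (the set $\{x_1,\dots,x_n\}$ would have no $\in$-minimal element), so each sentence of $S$ is a universal consequence of any $T\supseteq\ZF$. For the converse I would use the standard fact that a structure models $\ZF_\forall$ iff it embeds into a model of $\ZF$, together with the observation that the models of $S$ are exactly the $\{\in\}$-structures $(M,E)$ whose relation $E$ has no finite cycles. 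So it suffices to embed an arbitrary such $(M,E)$ into a model of $\ZF$; by compactness (consistency of $\ZF\cup\mathrm{Diag}(M)$) this reduces to realizing every \emph{finite} induced subgraph by genuine sets. A finite acyclic digraph admits a topological enumeration $v_1,\dots,v_k$ with $v_iEv_j\Rightarrow i<j$, and I would define recursively $b_{v_j}=\{b_{v_i}:v_iEv_j\}\cup\{\alpha_j\}$, where $\alpha_j$ is an ordinal chosen larger than the rank of everything constructed so far. The tags $\alpha_j$ force the $b_{v_j}$ to have strictly increasing ranks, hence to be distinct, and prevent any accidental membership, so that $b_{v_i}\in b_{v_j}$ holds exactly when $v_iEv_j$; this realizes the finite digraph inside any model of $\ZF$.

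For the \textbf{model companion} I would first reduce to the universal theory: since it depends only on $\ZF_\forall=S$, by the theorems recalled above the model companion exists iff $\mathcal E_S$ is elementary, and in that case it is the theory of the existentially closed models of $S$, i.e.\ of the existentially closed acyclic digraphs. The strategy is then to show $\mathcal E_S=\mathcal M_{S^*}$ and that $S^*$ is model complete. One direction is direct: if $M\in\mathcal E_S$, then given distinct $a_1,\dots,a_n\in M$ and $A\subseteq n$ one forms the acyclic one-point extension $M'=M\cup\{y\}$ with $a_i\in y$ exactly for $i\in A$ and $y$ a sink (so no cycle is created); the existential sentence $\exists y\,\delta_A(a_1,\dots,a_n,y)$ holds in $M'\sqsupseteq M$ and hence, by existential closure, in $M$. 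The second axiom is obtained the same way: for distinct $a,b$ acyclicity forbids directed paths both from $a$ to $b$ and from $b$ to $a$, so one may adjoin a connecting vertex $z$ in the admissible direction, keeping the extension acyclic, and pull the witness back into $M$. Thus every $M\in\mathcal E_S$ models $S^*$.

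The heart of the argument, and the step I expect to be the main obstacle, is proving that $S^*$ is model complete, which simultaneously yields the reverse inclusion $\mathcal M_{S^*}\subseteq\mathcal E_S$ (via Robinson's test) and condition (3) of model companionship. Here I would run a back-and-forth / quantifier-elimination argument in the spirit of the random graph and of dense linear orders: the quantifier-free type of an element over a finite substructure records only the finitely many facts $a_i\in y$, $y\in a_i$, $y=a_i$, and the extension axioms, formulated so as to control membership between $y$ and the $a_i$ in both directions while respecting acyclicity, say precisely that every such consistent type is realized. Given two models of $S^*$ with a common finite substructure, any element on one side can then be matched on the other, so embeddings between models of $S^*$ are elementary. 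Getting the extension scheme to capture exactly the acyclicity-consistent one-point extensions, and checking that $S^*$ indeed entails the acyclicity scheme $S$ (so that its models embed back into models of $\ZF$ by the Part~1 construction), is the delicate bookkeeping. Finally, for mutual embeddability I would close an arbitrary acyclic digraph under the extension and connectivity requirements by an $\omega$-chain of acyclicity-preserving one-point extensions, whose union is an acyclic model of $S^*$; this embeds every model of $\ZF$ into a model of $S^*$, the converse embedding being supplied by Part~1. Combining model completeness with mutual embeddability yields that $S^*$ is the model companion of $\ZF$.
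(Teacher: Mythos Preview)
The paper does not prove this theorem: it is stated as a citation of Hirschfeld \cite[Thm.~1, Thm.~5]{Hir} and then used as motivation, with the only further comment being the remark that in every model of $S^*$ the $\in$-relation is a dense linear order without endpoints \cite[Thm.~3]{Hir}. So there is no proof in the paper to compare your attempt against.

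That said, a few comments on your sketch. Part~1 is fine; the tagged-set construction for realizing a finite acyclic digraph inside a model of $\ZF$ is the standard argument and works as you describe. In Part~2, the step you yourself flag as the main obstacle really is unfinished. The extension axioms $\delta_A$ only prescribe which $x_i$ are \emph{elements of} $y$; they say nothing about whether $y\in x_i$. A naive back-and-forth therefore does not go through, since the quantifier-free $1$-type of a new element over a finite set records membership in both directions. The way Hirschfeld's argument actually proceeds is to show that in any model of $S^*$ the relation $\in$ is a dense linear order without endpoints (this is where the second, ``connectivity'' axiom does the real work, and where the acyclicity scheme $S$ is recovered), after which the other direction of membership is no longer independent data and the back-and-forth reduces essentially to that of $\mathsf{DLO}$. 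Your outline does not indicate how you would obtain totality and transitivity of $\in$ from $S^*$, nor how you would derive $S^*\vdash S$; both are needed, and neither is automatic from the axioms as displayed. Absent that, the claim that the extension scheme ``captures exactly the acyclicity-consistent one-point extensions'' is not justified, and the model-completeness step remains a gap.
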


In particular $S^*$ is also the model companion of $\ZFC$, given that $S$ is the universal theory of any $T\supseteq\ZF$, among which $\ZFC$.

Notice that $S$ only says that the graph of the $\in$-relation has no loops, while Hirschefeld also shows that
in every model of $S^*$ the formula $\exists x(a\in x\wedge x\in b)$ 
defines a dense linear order without endpoints \cite[Thm. 3]{Hir}.
In particular there is no apparent relation between the meaning of the $\in$-relation in a model of $\ZF$ (in its standard
models it is a well-founded relation not linearly ordered) and the meaning of the $\in$-relation in models of 
$S^*$ (it is a dense linear order without end-points).

We believe (as Hirschfeld) that the above result gives a clear mathematical insight of why the language $\bp{\in}$ is not expressive 
enough to describe the ``right'' model companion of set theory.
A key issue is the following: we are inclined to consider concepts and properties which can be formalized
by formulae with bounded quantifiers much simpler and concrete 
than those which can only be 
formalized by formulae which make use of unrestricted quantification. 
This is reflected by the fact that properties formalizable by means of formulae 
with bounded quantifiers are absolute between transitive models of $\ZFC$. This fact fails badly
for properties defined by means of unbounded quantification.

For example 
the property \emph{$f$ is a function} is expressible using only bounded quantification, while the property
\emph{$\kappa$ is a cardinal} is not. It is well known that the former is a property that is absolute between transitive models of 
$\ZFC$ containing $f$, while the latter is not. However if one tries to formalize by means of a first order formula in the signature $\in$
the formula \emph{$f$ is a function}, one realizes that any such formalization is a very complicated formula.
For example already the formalization of the concept \emph{$x$ is the ordered pair with first component $y$ and second component $z$}  by means of Kuratowski  gives an $\in$-formula $\phi(x,y,z)$ which is $\Sigma_2$:
\[
\exists t\exists u\;[\forall w\,(w\in x\leftrightarrow w=t\vee w=u)
\wedge\forall v\,(v\in t\leftrightarrow v=y)
\wedge\forall v\,(v\in u\leftrightarrow v=y\vee v=z)].
\]
 It is also a matter of fact that absolute properties are regarded as 
``tame'' set theoretic properties (as their truth value cannot be changed by forcing, e.g \emph{$f$ is a function} 
 remains true in any transitive model to which $f$ belongs), 
while non-absolute ones are more difficult to control (they are not immune to forcing). For example, when $\kappa$ is an uncountable 
cardinal of the ground model, it will  cease to be so in any generic extension by 
$\Coll(\omega,\kappa)$.

These are good reasons that lead to formalize set theory in a first order language able to recognize syntactically the different semantic 
complexity of absolute and non-absolute concepts. As Hirschfeld showed, this is not the case for the 
$\ZF$-axioms in the language $\bp{\in}$.

In Kunen and Jech's standard textbooks the solution adopted is that of passing from first order logic to a 
logic with bounded quantifiers $\exists x\in y$ and $\forall x\in y$, binding the variable $x$, so that
$\exists x\in y\phi(x,y,\vec{z})$ is logically equivalent to $\exists x(x\in y\wedge\phi(x,y,\vec{z}))$ and 
$\forall x\in y\phi(x,y,\vec{z})$ is logically equivalent to $\forall x(x\in y\rightarrow\phi(x,y,\vec{z}))$.
In this new logic \emph{$f$ is a function} is expressible by a formula with only bounded quantifiers, while
\emph{$\kappa$ is a cardinal} is expressible by a formula of type $\forall x\phi(x,\kappa)$ with $\phi$ having only bounded
quantifiers. 
However, Kunen and Jech's solution is not convenient for the scopes of this paper, because it 
formalizes set theory outside first order logic, making less transparent how we could use 
model theoretic techniques (designed expressly for first order logic) 
to isolate what is the correct model companion of set theory.
The alternative solution we adopt in this paper is that of expressing set theory in a first order 
language with relational symbols for any bounded formula.

\begin{notation}\label{not:keynotation}
\emph{}

\begin{itemize}
\item
$\tau_{\mathsf{ST}}$ is the extension of the first order signature $\bp{\in}$ for set theory 
which is obtained 
by adjoining predicate symbols
$R_\phi$ of arity $n$ for any $\Delta_0$-formula $\phi(x_1,\dots,x_n)$, 
and constant symbols for 
$\omega$ and $\emptyset$.

\item $\ZFC^{-}$ is the
$\in$-theory given by the axioms of
$\ZFC$ minus the power-set axiom.

\item
$T_{\ST}$ is the $\tau_{\ST}$-theory given by the axioms
\[
\forall \vec{x} \,(R_{\forall x\in y\phi}(y,\vec{x})\leftrightarrow \forall x(x\in y\rightarrow R_\phi(y,x,\vec{x}))
\]
\[
\forall \vec{x} \,[R_{\phi\wedge\psi}(\vec{x})\leftrightarrow (R_{\phi}(\vec{x})\wedge R_{\psi}(\vec{x}))]
\]
\[
\forall \vec{x} \,[R_{\neg\phi}(\vec{x})\leftrightarrow \neg R_{\phi}(\vec{x})]
\]
for all $\Delta_0$-formulae $\phi(\vec{x})$, together with the $\Delta_0$-sentences
\[
\forall x\in\emptyset\,\neg(x=x),
\]
\[
\omega\text{ is the first infinite ordinal}
\]
(the former is an atomic $\tau_{\ST}$-sentence, the latter is expressible as the atomic sentence for 
$\tau_{\ST}$ stating that
$\omega$ is a non-empty limit ordinal and all its elements are successor ordinals or $0$).
\item
$\ZFC^-_{\ST}$ is the $\tau_{\ST}$-theory 
\[
\ZFC^{-}\cup T_{\ST},
\] 
accordingly we define $\ZFC_{\ST}$.
\end{itemize}
\end{notation}

%
%

In $\ZFC_{\ST}$ we now obtain that many absolute concepts (such as that of being a function) are now expressed
by atomic formulas, while other more complicated ones 
(like for example those defined by means of transfinite recursion over an absolute property, such as 
\emph{$x$ is the transitive closure of $y$}) can still be expressed by means
of $\ZFC^{-}_{\ST}$-provably $\Delta_1$-properties of $\tau_{\ST}$ (i.e. properties which are $\ZFC^{-}_{\ST}$-provably equivalent
 at the same time to a $\Pi_1$-formula
and to a $\Sigma_1$-formula), which therefore are still absolute between any two models (even non-transitive)
$\mathcal{M}$, $\mathcal{N}$ of $\ZFC_{\ST}$  of which one is a substructure of the other. 
On the other hand many definable properties have truth values which may vary depending on which model of 
$\ZFC_{\ST}$ we work in  (for example \emph{$\kappa$ is an uncountable cardinal} is a $\Pi_1\setminus\Sigma_1$-property in 
$\ZFC_{\ST}$ whose truth value may depend on the choice of the model of $\ZFC_{\ST}$ to which $\kappa$ belongs).

Our first aim is to identify what could be the model companion of $\ZFC_{\ST}$.
To this aim, first recall that Levy's absoluteness gives that 
$H_{\omega_1}\prec_{\Sigma_1}V$, and that for any set $X$ there is a forcing extension in which 
$X$ is countable (just force with $\Coll(\omega,X)$). In particular one can argue that 
the $\Pi_2$-assertion $\forall X\exists f:\omega\to X\emph{ surjectve}$ is generically true for Robinson's infinite forcing applied to the forcing extensions of $V$. Notice that 
$H_{\omega_1}\models\forall X\exists f:\omega\to X\emph{ surjectve}$.

The natural conjecture is to infer that the $\tau_{\ST}$-theory of $H_{\omega_1}$ 
is the model companion of the $\tau_{\ST}$-theory of $V$. 
We now show exactly to which extent the conjecture is true, while proving that it is false.

Towards this aim we conclude this section introducing a semantic and relativized 
notion of model completeness and model companionship.

%

\begin{definition}
Let $\tau$ be a first order signature.
Given a category $(\mathcal{C},\to_{\mathcal{C}})$ of $\tau$-structures (elements of $\mathcal{C}$) and morphisms between them (elements of $\to_\mathcal{C}$),
a class $\mathcal{D}\subseteq\mathcal{C}$ is the class of its generic structures
if:
\begin{itemize}
\item For every structure $\mathcal{M}$ in $\mathcal{C}$ there is $\mathcal{N}\in\mathcal{D}$ and 
$i:\mathcal{M}\to\mathcal{N}$ in $\to_\mathcal{C}$. 
\item For every $i:\mathcal{M}\to\mathcal{N}$ in $\to_{\mathcal{C}}$ with $\mathcal{M},\mathcal{N}$  in $\mathcal{D}$, 
$i$ is $\Sigma_1$-elementary, i.e.  $i[\mathcal{M}]\prec_1\mathcal{N}$.
\end{itemize} 
If $\mathcal{D}=\mathcal{C}$ we say that the category $(\mathcal{C},\to_{\mathcal{C}})$ is model complete.
\end{definition}
In particular if $\mathcal{C}$ is the class of models of $T$, $\to_\mathcal{C}$ is the class of all morphisms between
models of $T$, and $\mathcal{D}=\mathcal{M}_S$, $S$ is the model companion of $T$.

\section{Boolean valued models and generic absoluteness}\label{sec:boolvalmod}

Our first aim is to outline which first order properties are first order invariant with respect to the forcing method.
Toward this aim we recall some standard facts on boolean-valued models for set theory, giving appropriate references 
for the relevant proofs (in particular \cite{BELLSTBVM}, or \cite{VIAAUDSTE13}, the forthcoming \cite{VIAAUDSTEBOOK},
the notes \cite{viale-notesonforcing}). 
In what follows we do not consider languages with function symbols in order to avoid some technical difficulties.

\begin{definition}
Let $\LL=\bp{R_i: i\in I}\cup\bp{c_j. j\in J}$ be a relational language, and $\bool{B}$ a  boolean algebra. 
A $\bool{B}$-valued model for $\LL$ is a tuple 
$\M=\bp{M}\cup\bp{=^\M_{\bool{B}}}\cup\bp{R_{i\bool{B}}^\M: i\in I}\cup\bp{c_j^\M: j\in J}$, where:
\begin{enumerate}
\item $M$ is a non-empty set;
\item $=^\M_{\bool{B}}$ is the boolean value of the equality symbol, i.e. a function
\begin{align*}
=^\M_{\bool{B}}:&\ M^2\to\bool{B};\\
\ap{x,&y}\mapsto\Qp{x=y}^\M_{\bool{B}}
\end{align*}
\item $R_{\bool{B}}^\M$ is the interpretation of the relational symbol $R$. If $R$ has arity $n$,
\begin{align*}
R_{\bool{B}}^\M:&\ M^n\to\bool{B};\\
\ap{x_1, \dots,&x_n}\mapsto\Qp{R(x_1, \dots, x_)}^\M_{\bool{B}}
\end{align*}
\item $c_j^\M\in M$ is the interpretation of the constant symbol $c_j$.
\end{enumerate}
\noindent
We require that the following conditions hold:
\begin{itemize}
\item for all $x, y, z\in M$,
\begin{equation}
\Qp{x=x}^\M_{\bool{B}}=1_{\bool{B}},
\end{equation}
\begin{equation}
\Qp{x=y}^\M_{\bool{B}}=\Qp{y=x}^\M_{\bool{B}},
\end{equation}
\begin{equation}
\Qp{x=y}^\M_{\bool{B}}\wedge\Qp{y=z}^\M_{\bool{B}}\leq\Qp{x=z}^\M_{\bool{B}};
\end{equation}
\item if $R\in \LL$ is a $n$-ary relational symbol, for every $\ap{x_1, \dots, x_n}, \ap{y_1, \dots, y_n}\in M^n$,
\begin{equation}
\Bigl(\bigwedge_{i=1}^n\Qp{x_i=y_i}^\M_{\bool{B}}\Bigr)\wedge
\Qp{R(x_1, \dots, x_n)}^\M_{\bool{B}}\leq\Qp{R(y_1, \dots, y_n)}^\M_{\bool{B}}.
\end{equation}
\end{itemize}

\end{definition}

From here on, if no confusion can arise, we avoid to put the superscript $\M$ and the subscript $\bool{B}$. Moreover, we will write $\M$ or $M$ equivalently to indicate a boolean valued model or its underlying set.

In general it makes sense to define the first order semantics also for certain $\bool{B}$-valued models with $\bool{B}$ non complete.
However in this paper we can limit to define this semantics only for $\bool{B}$-valued models with $\bool{B}$ a complete boolean algebra.
\begin{definition}
Let $\LL=\bp{R_i: i\in I}\cup\bp{c_j. j\in J}$ be a relational language, $\bool{B}$ a complete boolean algebra,
$\mathcal{M}$ a $\bool{B}$-valued model. 
We evaluate the formulae of $\LL(M):=\LL\cup\bp{c_\tau: \tau\in M}$ without free variables in the following way:
\begin{itemize}
\item $\Qp{R(c_{\tau_1}, \dots, c_{\tau_n})}:=\Qp{R(\tau_1, \dots, \tau_n)}$;
\item $\Qp{\varphi\wedge\psi}:=\Qp{\varphi}\wedge\Qp{\psi}$;
\item $\Qp{\neg\varphi}:=\neg\Qp{\varphi}$;
\item $\Qp{\varphi\rightarrow\psi}:=\neg\Qp{\varphi}\vee\Qp{\psi}$;
\item $\Qp{\exists x\varphi(x, c_{\tau_1}, \dots, c_{\tau_n})}:=
\bigvee_{\sigma\in M}\Qp{\varphi(c_\sigma, c_{\tau_1}, \dots, c_{\tau_n})}$;
\item $\Qp{\forall x\varphi(x, c_{\tau_1}, \dots, c_{\tau_n})}:=
\bigwedge_{\sigma\in M}\Qp{\varphi(c_\sigma, c_{\tau_1}, \dots, c_{\tau_n})}$.
\end{itemize}
Given an assignment $\mathcal{\nu}$ of the free variables to $\mathcal{M}$ and an $\LL$-formula $\phi(x_1,\dots,x_n)$
\[
\Qp{\varphi(x_1, \dots, x_n)}^{\mathcal{M},\nu}_\bool{B}=\Qp{\varphi(\nu(x_1), \dots, \nu(x_n))}.
\]
\end{definition}
We write $\Qp{\varphi(\tau_1, \dots,\tau_n)}$ rather than $\Qp{\varphi(c_{\tau_1}, \dots, c_{\tau_n})}^{\mathcal{M}}_\bool{B}$.

Observe that, if $\bool{B}=\bp{0, 1}$, a $\bool{B}$-model is simply a Tarski structure for the language $L$, 
and the semantic we have just defined is the Tarski semantic.
\begin{definition}
A statement $\varphi$ in the language $L$ is \emph{valid} in a $\bool{B}$-valued model $\M$ for $L$ if 
$\Qp{\varphi}^\M_{\bool{B}}=1_{\bool{B}}$. A theory $T$ is valid in $\M$ if every axiom of $T$ is valid in $\M$.
\end{definition}
\noindent
It can be proved (see the proof of \cite[Theorem 4.1.5]{viale-notesonforcing}) that, if $\varphi(x_1, \dots, x_n)$ is a formula 
with free variables $x_1, \dots, x_n$ and $\tau_1, \dots, \tau_n, \sigma_1, \dots, \sigma_n\in M$, then
\begin{equation}
\Qp{\tau_1=\sigma_1}\wedge\dots\wedge\Qp{\tau_n=\sigma_n}\wedge\Qp{\varphi(\tau_1, \dots, \tau_n)}\leq
\Qp{\varphi(\sigma_1, \dots, \sigma_n)}.
\end{equation}
\noindent
From here on, we will consider this fact as granted.
\begin{definition}
Let $\bool{B}$ be a complete boolean algebra and let $L=\bp{R_i: i\in I,\, c_j:j\in J}$ be a first order relational language. 
Let $\M=\bp{M, R_i^\M: i\in I}$ be a $\bool{B}$-valued model. Let $F$ be a filter in $\bool{B}$. 
We define the \emph{quotient} $\M/_F=\bp{M/_F, R_i/_F}$ of $\M$ by $F$ as follows:
\begin{itemize}
\item $M/_F:=\bp{[\tau]_F: \tau\in M}$, where $[\tau]_F:=\bp{\tau\in M: \Qp{\tau=\sigma}\in F}$;
\item $\Qp{R_i([\tau_1]_F, \dots, [\tau_n]_F)}^{\M/_F}:=\Bigl[\Qp{R_i(\tau_1, \dots, \tau_n)}^\M\Bigr]_F\in\bool{B}/_F$ 
for every $i\in I$.
\item $c_j$ is interpreted by $[c_j^{\mathcal{M}}]_F$.
\end{itemize}
\end{definition}
\noindent
It is possible to see that $\M/_F$ is a $\bool{B}/_F$-valued model (even if $\bool{B}/_F$ is not complete). 
In particular, if $U$ is a ultrafilter, $\M/_U$ is a $2$-valued model, i.e. a classical Tarski structure.

\begin{definition}
Assume $\bool{B}$ is a complete boolean algebra.
A $\bool{B}$-valued model $\mathcal{M}$ for the signature $\mathcal{L}$ is \emph{full} if 
for any $\mathcal{L}$-formula $\phi(x_0,\dots,x_n)$ and  $\tau_1,\dots,\tau_n\in \mathcal{M}$
\[
\Qp{\exists x\phi(x,\tau_1,\dots,\tau_n)}^{\mathcal{M}}_{\bool{B}}=\Qp{\phi(\sigma,\tau_1,\dots,\tau_n)}^{\mathcal{M}}_{\bool{B}}
\]
for some $\sigma\in \mathcal{M} $.
\end{definition}

\begin{lemma}[Lemma 14.14 \cite{JECH}]
Assume $\bool{B}$ is a complete boolean algebra and
$\mathcal{M}$ is a full $\bool{B}$-valued model for $\LL$.
Then for all $\tau_1,\dots,\tau_n\in\mathcal{M}$ and ultrafilter $G$ on $\bool{B}$
\[
\mathcal{M}/_G\models \phi([\tau_1]_G,\dots,[\tau_n]_G) \text{ if and only if } \Qp{\phi(\tau_1,\dots,\tau_n)}^\mathcal{M}\in G.
\]
\end{lemma}

A property implying fullness is the mixing property:

\begin{definition}
Let $\bool{B}$ be a complete boolean algebra and
$\mathcal{M}$ a $\bool{B}$-valued model.

$\mathcal{M}$ has the mixing property if
for all $A$ antichains on $\bool{B}$ and $\bp{\tau_A:a\in A}\subseteq \mathcal{M}$ there exists 
$\tau \in \mathcal{M}$ such that
\[
\Qp{\tau=\tau_a}^{\mathcal{M}}\geq a
\]
for all $a\in A$.
\end{definition}

Note that the mixing property for the $\bool{B}$-valued model $\mathcal{M}$
depends only on the interpretation of the equality symbol, while
fullness depends on the intepretation of all relation symbols of $\LL$ in $\mathcal{M}$.

\begin{lemma}
Let $\bool{B}$ be a complete boolean algebra
Assume $\mathcal{M}$ is a $\bool{B}$-valued model with the mixing property.
Then $\mathcal{M}$ is full.
\end{lemma}
\begin{proof}
This is a variation of \cite[Lemma 14.19]{JECH}. Else see \cite[Proposition 2.1.7]{PIEROBONTHESIS}.
\end{proof}


\subsection{The model companion of set theory for the generic multiverse} \label{sec:modelcompsetth}

Recall that $V$ denotes the universe of all sets, and for any complete boolean algebra 
$\bool{B}\in V$ 
\[
V^{\bool{B}}=\bp{\tau: \, \tau:X\to \bool{B} \text{ is a function with $X\subseteq V^{\bool{B}}$ a set}}
\]
is the boolean valued model for set theory generated by forcing with $\bool{B}$. 

$V^{\bool{B}}$ is endowed with the structure of a $\bool{B}$-valued model for the language of set theory 
$\mathcal{L}=\bp{\in,\subseteq}$, letting (see \cite[Def. 5.1.1]{viale-notesonforcing} for details)
\begin{equation}
\Qp{\tau_1\in\tau_2}_{\bool{B}}=\bigvee_{\sigma\in\dom(\tau_2)}
(\Qp{\tau_1=\sigma}_{\bool{B}}\wedge\tau_2(\sigma)), 
\end{equation}
\begin{equation}
\Qp{\tau_1\subseteq\tau_2}_{\bool{B}}=\bigwedge_{\sigma\in\dom(\tau_1)}
(\neg\tau_1(\sigma)\vee\Qp{\sigma\in\tau_2}_{\bool{B}}), 
\end{equation}
\begin{equation}
\Qp{\tau_1=\tau_2}_{\bool{B}}=
\Qp{\tau_1\subseteq\tau_2}_{\bool{B}}\wedge
\Qp{\tau_2\subseteq\tau_1}_{\bool{B}}.
\end{equation}

By \cite[Thm 1.17]{BELLSTBVM} $V^{\bool{B}}$ is a $\bool{B}$-valued model for $\bp{\in,\subseteq,=}$.
By \cite[Thm 1.33]{BELLSTBVM} all axioms of $\ZFC$ gets boolean value $1_\bool{B}$ in $V^{\bool{B}}$.
By \cite[Lemma 14.18]{JECH} $V^{\bool{B}}$ has the mixing property.


The class of models we will analyze is given by the generic extensions of initial segments of $V$. 
To make this precise we
need a couple of definitions.

\begin{definition}\label{def:HkappaB}
Let $\bool{B}$ be a complete boolean algebra.
and $\dot{\kappa}\in V^{\bool{B}}$ be
 such that 
$\Qp{\dot{\kappa}\text{ is a regular cardinal}}_{\bool{B}} =1_{\bool{B}}$.
Given $\kappa\geq\bool{B}$ the least regular cardinal in $V$ such that 
$\Qp{\dot{\kappa}\leq\check{\kappa}}=1_{\bool{B}}$ and $\bool{B}$ is $<\kappa$-CC, let
\[
H_{\dot{\kappa}}^\bool{B}=\bp{\tau\in V^{\bool{B}}\cap H_\kappa^V: 
\Qp{ \tau\text{ has transitive closure of size less than }\dot{\kappa}}_\bool{B}=1_{\bool{B}}}
\]
\end{definition}
We let $\dot{\omega_1}^{\bool{B}}$ and $H_{\dot{\omega_1}}^\bool{B}$ be canonical $\bool{B}$-name for the first uncountable cardinal
and for the family of hereditarily countable sets, i.e. 
\[
\Qp{\dot{\omega_1}^{\bool{B}}\text{ is the first uncountable cardinal}}_{\bool{B}} =1_{\bool{B}},
\]
\[
H_{\dot{\omega_1}}^\bool{B}=H_{\dot{\omega_1}^{\bool{B}}}^\bool{B}.
\]

It is left to the reader to check that for any $\Delta_0$-formula $\phi(\tau_1,\dots,\tau_n)$ for the signature $\in$
the truth value of $\Qp{\phi(\tau_1,\dots,\tau_n)}_\bool{B}$ is the same in $H_{\dot{\kappa}}^\bool{B}$ and in
$V^{\bool{B}}$: by inspecting the definitions one realizes that
the truth value of these formulae is defined by transfinite recursion on a set of names contained in 
$H_{\dot{\kappa}}^\bool{B}$, hence the computations of these truth value is the same in $V^{\bool{B}}$ and in
$H_{\dot{\kappa}}^\bool{B}$. Key to this result is the equality
\[
\Qp{\forall x\in \sigma\phi(x,\sigma,\tau_1,\dots,\tau_n)}_{\bool{B}}=
\bigwedge_{\tau\in\dom{\sigma}}\tau(\sigma)\wedge\Qp{\phi(\sigma,\tau,\tau_1,\dots,\tau_n)}_{\bool{B}},
\]
(see \cite[Exercise 14.12]{JECH}). See also the proof of Lemma \ref{lem:Cohengen} below.


\begin{lemma}[Mixing Lemma for $H_{\dot{\kappa}}^\bool{B}$]
Assume $\bool{B}$ is a cba and $\dot{\kappa}$ is as in Def. \ref{def:HkappaB}.
Then $H_{\dot{\kappa}}^\bool{B}$ satisfies the mixing property.
\end{lemma}

\begin{proof}
The same proof that works for $V$ (e.g. \cite[Lemma 14.18]{JECH}) works for
$H_{\dot{\kappa}}^\bool{B}$ as well, since the required $\bool{B}$-name $\tau$  construed in that proof
is in $H_{\dot{\kappa}}^\bool{B}$ if $\bp{\tau_A:a\in A}\subseteq H_{\dot{\kappa}}^\bool{B}$. 
\end{proof}


 The forcing theorem states that:
\begin{itemize}
\item \cite[Thm 4.3.2, Thm 5.1.34]{viale-notesonforcing} (\L o\'s theorem for full boolean valued models)
For all ultrafilter $G$ on $\bool{B}$, $\tau_1,\dots,\tau_n\in V^{\bool{B}}$, and $\tau_{\ST}$-formula $\phi(x_1,\dots,x_n)$
\[
(V^{\bool{B}}/G,\in/_G)\models\phi([\tau_1]_G,\dots,[\tau_n]_G)\text{ if and only if } \Qp{\phi(\tau_1,\dots,\tau_n)}_{\bool{B}}\in G.
\]

\item The same conclusion holds with $H_{\dot{\kappa}}^\bool{B}$ in the place of $V^{\bool{B}}$.

\item \cite[Thm. 5.2.3]{viale-notesonforcing} Whenever $G$ is $V$-generic for $\bool{B}$ 
the map 
\[
[\tau]_G\mapsto \tau_G=\bp{\sigma_G: \exists b\in G\,\ap{\sigma,b}\in\tau}
\]
is the Mostowski collapse of the
class $V^{\bool{B}}/G$ defined in $V[G]$ onto $V[G]$ and its restriction to $H_{\dot{\kappa}}^\bool{B}/_G$ maps the latter onto
$H_{\dot{\kappa}_G}^{V[G]}$.
\end{itemize}
 

Using the forcing theorem one gets that 
$\Qp{\psi}^{H_{\dot{\kappa}}^\bool{B}}=1_{\bool{B}}$
for any 
axiom $\psi$ of $\ZFC^-_{\ST}$, since
it is the case that for all $G$ $V$-generic for 
$\bool{B}$
\[
H_{\dot{\kappa}}^\bool{B}[G]=\bp{\tau_G: \tau\in H_{\dot{\kappa}}^\bool{B}}=H_{\dot{\kappa}_G}^{V[G]},
\]
i.e. $H_{\dot{\kappa}}^\bool{B}$ is a canonical family of $\bool{B}$-names to denote the
$H_{\dot{\kappa}_G}^{V[G]}$ of the generic extension, and the latter always models $\ZFC^-_{\ST}$.



%

%
%

When $\bool{B}\in V$ is a 
$<\kappa$-cc complete boolean algebra, then  
$\Qp{\check{\kappa}\text{ is a regular cardinal}}=1_{\bool{B}}$. Therefore 
$H_{\check{\kappa}}^\bool{B}$ is 
a canonical set of $\bool{B}$-names which describes the $H_\kappa$ of a generic extension of $V$ by $\bool{B}$.

The choice to work with $H_{\dot{\kappa}}^\bool{B}$, instead of $V^\bool{B}$, is motivated by the fact that the 
former is a set definable in $V$ using the parameters 
$\bool{B}$ and $\dot{\kappa}$, while the latter is just a definable class in the parameter $\bool{B}$.

Having defined the structures we will be interested in (the structures 
$H_{\dot{\kappa}}^\bool{B}/_G$) we now turn to the definition of the relevant morphisms
between them.

\begin{definition}
Given $i:\bool{B}\to\bool{C}$ complete homomorphism of complete boolean algebras,
$i$ extends to a map $\hat{i}:V^{\bool{B}}\to V^{\bool{C}}$ defined by transfinite recursion by
\[
\hat{i}(\tau)=\bp{\ap{\hat{i}(\sigma),i(b)}: \,\ap{\sigma,b}\in\tau}.
\] 
Let $\dot{\kappa}\in V^{\bool{B}}$, $\dot{\delta}\in V^{\bool{C}}$ be such that $H_{\dot{\kappa}}^\bool{B}$, $H_{\dot{\delta}}^\bool{C}$ are 
well defined according to Def. \ref{def:HkappaB}, and $i\restriction H_{\dot{\kappa}}^\bool{B}\subseteq H_{\dot{\delta}}^\bool{C}$.

Given $\tau_1,\dots,\tau_n\in H_{\dot{\kappa}}^\bool{B}$,
$\phi(\tau_1,\dots,\tau_n)$ is generically absolute for $i$, $H_{\dot{\kappa}}^\bool{B}$, $H_{\dot{\delta}}^\bool{C}$ if 
\[
i(\Qp{\phi(\tau_1,\dots,\tau_n}_\bool{B}^{H_{\dot{\kappa}}^\bool{B}})=
\Qp{\phi(\hat{i}(\tau_1),\dots,\hat{i}(\tau_n)}^{H_{\dot{\delta}}^\bool{C}}_\bool{C}.
\]

$i$ is a boolean $\Sigma_n$-embedding of $H_{\dot{\kappa}}^\bool{B}$ into $H_{\dot{\delta}}^\bool{C}$ if all $\Sigma_n$-formulae for $\tau_{\ST}$ 
are generically absolute for $i$, $H_{\dot{\kappa}}^\bool{B}$, $H_{\dot{\delta}}^\bool{C}$ ($i$ is a boolean embedding if it preserves only atomic formulae).
\end{definition}


We now prove the following Lemma:

\begin{lemma}\label{lem:Cohengen}
Let $i:\bool{B}\to\bool{C}$ be a complete homomorphism such that 
$\Qp{\dot{\kappa}\text{ is a regular cardinal}}_{\bool{B}} =1_{\bool{B}}$, 
$\Qp{\dot{\delta}\text{ is a regular cardinal}}_{\bool{C}} =1_{\bool{C}}$, and
$\Qp{\hat{i}(\dot{\kappa})\leq\dot{\delta}}_{\bool{C}}=1_{\bool{C}}$.
Then:
\begin{enumerate} 
\item \label{lem:Cohengen-1}
$i$ is a boolean embedding. 
\item \label{lem:Cohengen-2}
For any $H\in St(\bool{C})$, letting $G\in St(\bool{B})$
be $k^{-1}[H]$,
the map 
\begin{align*}
\hat{i}/_H:&H_{\dot{\kappa}}^{\bool{B}}/_G\to H_{\dot{\delta}}^{\bool{C}}/_H\\
&[\tau]_G\mapsto [\hat{i}(\tau)]_H
\end{align*}
is a $\tau_{\ST}$-morphism.
\item \label{lem:Cohengen-3}
Assume further that $\dot{\kappa}=\omega_1^{\bool{B}}$. Then
$i$ is 
$\Sigma_1$-elementary.
\end{enumerate}
\end{lemma}
The first two parts are a straightforward consequence of the preservation of $\Delta_1$-properties through 
forcing extensions, while the third part
follows from Shoenfield's absolutenss, given that $\Sigma_1$-properties in real parameters corresponds to 
$\Sigma^1_2$-properties and any element of $H_{\omega_1}$ is coded in an absolute manner by a real. 
Let us however give an explicit proof in the set-up we built so far so to make the reader acquainted with it.

\begin{proof}
\emph{}

\begin{enumerate}

\item
Given a $\Delta_0$-formula $\phi$ for the signature $\bp{\in,\subseteq,=}$, we must show that 
\[
i(\Qp{\phi(\tau_1,\dots,\tau_n}_\bool{B}^{H_{\dot{\kappa}}^\bool{B}})=
\Qp{\phi(\hat{i}(\tau_1),\dots,\hat{i}(\tau_n)}^{H_{\dot{\delta}}^\bool{C}}_\bool{C}.
\]
	We prove the result by induction on the number of bounded quantifiers in $\phi$. For atomic formulas 
	$\psi$ (either $x = y$ or $x \in y$), we proceed by further induction on the rank of 
	$\tau_1$, $\tau_2$.
	
	\[
	\begin{split}
		i\cp{ \Qp{\tau_1 \in \tau_2}_{\bool{B}} } &= 
		i\cp{ \bigvee \bp{\tau_2(\dot{a}) \wedge \Qp{\tau_1 = 
		\dot{a}}_{\bool{B}} : \dot{a} \in \dom(\tau_2)}} \\
		&= \bigvee \bp{ i\cp{\tau_2(\dot{a})} \wedge i\cp{ \Qp{\tau_1 = 
		\dot{a}}_{\bool{B}}}: {\dot{a} \in \dom(\tau_2)}} \\
		&= \bigvee\bp{ i\cp{\tau_2(\dot{a})} \wedge \Qp{\hat{\imath}(\tau_1) = 
		\hat{\imath}(\dot{a})}_{\bool{C}} : {\dot{a} \in \dom(\tau_2)}}\\
		&= \Qp{\hat{\imath}(\tau_1) \in \hat{\imath}(\tau_2)}_{\bool{C}} \\
		i\cp{ \Qp{\tau_1 \subseteq \tau_2}_{\bool{B}} } &= 
		i\cp{ \bigwedge \bp{ \tau_1(\dot{a}) \rightarrow \Qp{\dot{a} \in 
		\tau_2}_{\bool{B}}: {\dot{a} \in \dom(\tau_1)}}} \\	
		&= \bigwedge \bp{ i\cp{\tau_1(\dot{a})}\rightarrow 
		i\cp{ \Qp{\dot{a} \in \tau_2}_{\bool{B}} }: {\dot{a} \in \dom(\tau_1)}} \\
		&= \bigwedge \bp{ i\cp{\tau_1(\dot{a})} \rightarrow 
		\Qp{\hat{\imath}(\dot{a}) \in \hat{\imath}(\tau_2)}_{\bool{C}}: {\dot{a} \in \dom(\tau_1)}} \\
		&=\Qp{\hat{\imath}(\tau_1) \subseteq \hat{\imath}(\tau_2)}_{\bool{C}}. 
	\end{split}
	\]
	
	We used the inductive hypothesis in the last row of each case. Since 
	$\Qp{\tau_1 = \tau_2} = \Qp{\tau_1 \subseteq \tau_2} \wedge \Qp{\tau_2 
	\subseteq \tau_1}$, the proof for $\psi$ atomic $\bp{\in,\subseteq,=}$-formula is complete. 
		
	The induction step for boolean connectives is left to the reader. 
	Suppose now that $\psi = \exists x \in y ~ \phi$ is a 
	$\Delta_0$-formula for the signature  $\bp{\in,\subseteq,=}$
	and the inductive hypothesis holds for $\phi$. Then
	\[
	\begin{split}
	    i&\cp{\Qp{\exists x \in \tau_1 \phi(x,\tau_1,\ldots,\tau_n)}_{\bool{B}}} \\
		& = \bigvee \bp{  i\cp{\tau_1(\dot{a})} \wedge 
		i\cp{\Qp{\phi(\dot{a},\tau_1,\ldots,\tau_n)}_{\bool{B}}}: {\dot{a} \in \dom(\tau_1)}} \\
		& = \bigvee \bp{ i(\tau_1(\dot{a})) \wedge 
		\Qp{\phi\cp{\hat{\imath}(\dot{a}),
		\hat{\imath}(\tau_1),\ldots,\hat{\imath}(\tau_n)}}_{\bool{C}} : {\dot{a} \in \dom(\tau_1)}} \\
		& = \Qp{\exists x \in \hat{\imath}(\tau_1) ~ 
		\phi\cp{x,\hat{\imath}(\tau_1),\ldots,\hat{\imath}(\tau_n)}}_{\bool{C}}
	\end{split}
	\]
	
\item Immediate by the forcing theorem for $H_{\dot{\kappa}}^{\bool{B}}$ and $H_{\dot{\delta}}^{\bool{C}}$, and 
the previous item.
	
%

\item
Assume $\phi(x,x_1,\dots,x_n)$ is a $\Sigma_0$-formula for $\bp{\in,\subseteq,=}$ and
$(\tau_1,\dots,\tau_n)$ is a tuple in $H_{\dot{\omega_1}}^{\bool{B}}$ such that
\[
\Qp{\exists x\phi(x,\hat{i}(\tau_1),\dots,\hat{i}(\tau_n))}^{H_{\dot{\delta}}^\bool{C}}_\bool{C}\geq i(b).
\]
It suffices to show that 
\[
\Qp{\exists x\phi(x,\tau_1,\dots,\tau_n)}^{H_{\dot{\omega}_1}^\bool{B}}_\bool{B}\geq b.
\] 
Fix $G$ $V$-generic for $\bool{B}$ such that $b\in G$. Work in $V[G]$.

Remark that $\bool{C}/_{i[G]}$ is a boolean algebra in $V[G]$ whose elements are the equivalence classes
$[c]_G=\bp{d\in\bool{C}: \Qp{d=c}\in G}$ and with quotient boolean operations (it is actually a complete boolean algebra in $V[G]$, 
but this does not matter here).

Let $P$ be in $V[G]$ the forcing notion $(\bool{C}/_{i[G]})^+$ given by the positive elements of the boolean algebra.

Pick a model $M\in V[G]$ such that 
$M\prec (H_{|P|^+})^{V[G]}$, $M$ is countable in $V[G]$,  and
$P,a_1=(\tau_1)_G,\dots,a_n=(\tau_n)_G\in M$. 
Let $\pi_M:M\to N$ be its transitive collapse
and $Q=\pi_M(P)$. Notice also that 
$\pi_M(a_i)=a_i$ for $i=1,\dots,n$ since $a_i\in H_{\omega_1}^{V[G]}\cap M$ and this latter set is transitive since $\omega\subseteq M$.
Since $\pi_M$ is an isomorphism of $M$ with $N$,
\[
N\models(\Vdash_{Q}\exists x\phi(x,a_1,\dots,a_n)).
\] 
Now let $H\in V[G]$ be $N$-generic for $Q$ ($H$ exists in $V[G]$ since $N$ is countable in $V[G]$),
then, by Cohen's fundamental theorem of forcing applied in $V[G]$ to $N$ and $Q$
(since $N$ is a countable transitive model of a large enough initial fragment of $\ZFC$), 
we have that $N[H]\models\exists x\phi(x,a_1,\dots,a_n)$.
So we can pick $a\in N[H]$ such that $N[H]\models\phi(a,a_1,\dots,a_n)$. 
Since $N,H\in (H_{\aleph_1})^{V[G]}$ are countable in $V[G]$, $N[H]$ is countable and transitive; hence we have that
$V[G]$ models that $N[H]\in H_{\omega_1}^{V[G]}$, and thus $V[G]$ models that 
$a$ as well belongs to $H_{\omega_1}^{V[G]}$.
Since $\phi(x,x_1,\dots,x_n)$ is a $\Sigma_0$-formula, 
$V[G]$ models that $\phi(a,a_1,\dots,a_n)$ is absolute between the
transitive sets $N[H]\subset H_{\omega_1}^{V[G]}$
to which $a,a_1,\dots,a_n$ belong. In particular
$a$ witnesses in $V[G]$ that $H_{\omega_1}^{V[G]}\models\exists x\phi(x,a_1,\dots,a_n)$. 

Since the argument can be repeated for any $G$ $V$-generic for $\bool{B}$ with $b\in G$, 
we conclude that 
\[
H_{\omega_1}^{V[K]}\models\exists x\phi(x,(\tau_1)_K,\dots,(\tau_n)_K)
\]
for any $K$ $V$-generic for $\bool{B}$ with $b\in K$.

By the maximum principle and the forcing theorem applied in $V$ to $\bool{B}$, 
we can find $\sigma\in V^{\bool{B}}$ such that
\[
\Qp{(\sigma\text{ is hereditarily countable})\wedge \phi(\sigma,\tau_1,\dots,\tau_n)}^{V^{\bool{B}}}_\bool{B}\geq b.
\]
Since $\sigma$ is a $\bool{B}$-name for an hereditarily countable set according to $b$, it is decided by countably many antichains
below\footnote{For example let $\dot{R}$ be a canonical name for a binary relation on $\omega$ coding the transitive closure of $\bp{\sigma_G}$
for any $G$ $V$-generic for $\bool{B}$ with $b\in G$. Then for any such $G$ the transitive collapse of 
$\dot{R}_G$ is the transitive closure of $\bp{\sigma_G}$, and clearly 
$\dot{R}$ is decided by countably many antichains below $b$.} $b$.

In particular we can find in $V$, $\tau\in H_{\dot{\omega_1}}^{\bool{B}}$ such that $\Qp{\sigma=\tau}^{V^{\bool{B}}}_\bool{B}\geq b$.
We conclude that 
\[
\Qp{\phi(\tau,\tau_1,\dots,\tau_n)}^{V^{\bool{B}}}_\bool{B}\geq b.
\]
Since $\phi$ is a $\Delta_0$-formula
\[
\Qp{\phi(\tau,\tau_1,\dots,\tau_n)}^{H_{\dot{\omega_1}}^{\bool{B}}}=\Qp{\phi(\tau,\tau_1,\dots,\tau_n)}^{V^{\bool{B}}}_\bool{B}\geq b.
\]
The thesis follows.
\end{enumerate}
\end{proof}

\begin{definition}
The \emph{generic multiverse} $(\Omega(V),\to_{\Omega(V)})$ is the collection:
\[
\bp{H_{\dot{\kappa}}^{\bool{B}}/_G:\,
\Qp{\dot{\kappa}\text{ is a regular cardinal}}_{\bool{B}}=1_{\bool{B}},\,G\in St(\bool{B})}.
\]
Its morphism are the $\tau_{\ST}$-morphisms of the form $\hat{i}/_H:H_{\dot{\kappa}}^{\bool{B}}/_G\to H_{\dot{\delta}}^{\bool{C}}/_H$ 
for some complete homomorphism 
$i:\bool{B}\to\bool{C}$ with $H\in St(\bool{C})$, $G=f^{-1}[H]$, 
$\Qp{\hat{i}(\dot{\kappa})\leq\dot{\delta}}_{\bool{C}}=1_{\bool{C}}$.
\end{definition}

Notice\footnote{There can be morphisms $h:H_\kappa^{\bool{B}}/_G\to H_\delta^{\bool{C}}/_H$ which are not of the form
$\hat{f}/_H$ for some complete homomorphism $f:\bool{B}\to \bool{C}$, even in case 
$\bool{B}$ preserve the regularity of $\kappa$ and $\bool{C}$ the regularity of $\delta$. 
We do not spell out the details of such possibilities.} that $\Omega(V)$ is a definable class in $V$. $\Omega(V)$ is a formulation in the language of boolean valued models of the notion of generic multiverse.

This is the first result we want to bring forward:

\begin{theorem}\label{thm:omegaVmodcompl}
The family 
\[
\bp{H_{\dot{\omega_1}}^{\bool{B}}/_G:\,\bool{B}\text{ is a cba and }G\in St(\bool{B})}.
\]
is the class of generic structures of the generic multiverse $(\Omega(V),\to_{\Omega(V)})$.
\end{theorem}

\begin{proof}
By Lemma \ref{lem:Cohengen}(\ref{lem:Cohengen-3}), the $\tau_{\ST}$-models 
of type $H_{\dot{\omega_1}}^{\bool{B}}/_G$ are $\Sigma_1$-elementary in any of their superstructures in
$\Omega(V)$. By \cite[Cor 26.8 ]{JECH} any model of the form $H_{\dot{\kappa}}^{\bool{B}}$ is absorbed
by a model of the form $H_{\dot{\omega_1}}^{\bool{C}}$, where $\bool{C}$ is the boolean completion of 
$\Coll(\omega,H_{\dot{\kappa}}^{\bool{B}})$.

The thesis follows immediately.
\end{proof}

A natural questions arise: 
\begin{quote}
Is the $\tau_{\ST}$-theory 
$T$ of $H_{\omega_1}^V$ model complete?
\end{quote}
If this question has an affirmative answer, $T$ is the model companion of the $\tau_{\ST}$-theory of $V$: 
Levy's absoluteness allows to prove that
$H_{\omega_1}$ is $\Sigma_1$-elementary 
in $V$ with respect to $\tau_{\ST}$, hence the two structures have the same universal theory and we 
can apply Robinson's test to them.


In the next section we
argue that this question has a negative answer. 
Nonetheless in the forthcoming \cite{VIAPAR} the second author and Parente will show that
the models of type $H_{\dot{\omega_1}}^{\bool{B}}/_G$ are existentially closed for their own universal theory and 
contain a copy of any set sized model of the $\tau_{\ST}$-universal theory of $V$.


\section{Second order arithmetic and $\text{Th}(H_{\omega_1})$} 	\label{sec:nonmodcompletHomega1}

We define second order number theory as the first order theory of the structure
\[
(\mathcal{P}(\mathbb{N})\cup\mathbb{N},\in,\subseteq,=,\mathbb{N}).
\]

$\Pi^1_n$-sets (respectively $\Sigma^1_n$, $\Delta^1_n$)  are
the subsets of $\mathcal{P}(\mathbb{N})\equiv 2^{\mathbb{N}}$ 
defined by a $\Pi_n$-formula (respectively by a $\Sigma_n$-formula, at the same time 
by a $\Sigma_n$-formula and a $\Pi_n$-formula in the appropriate language), if the formula defining a set 
$A\subseteq (2^{\mathbb{N}})^n$ has some parameter
$\vec{r}\in (2^{\mathbb{N}})^{<\omega}$ we accordingly write that $A$ is $\Pi^1_n(\vec{r})$ 
(respectively $\Sigma^1_n(\vec{r})$, $\Delta^1_n(\vec{r})$).
if the formula defining a set 
$A\subseteq (2^{\mathbb{N}})^n$ uses an extra-predicate symbol $B\subseteq (2^\omega)^k$ we write that
$A$ is $\Pi^1_n(B)$ (respectively $\Sigma^1_n(B)$, $\Delta^1_n(B)$).

$A\subseteq (2^{\mathbb{N}})^N$ is projective if it is defined by some $\Pi^1_n$-property for some $n$.
Similarly we define the notion of being projective in $\vec{r}\in (2^{\mathbb{N}})^{<\omega}$ or 
$B\subseteq  (2^\omega)^k$.
\begin{remark}
$A\subseteq (2^{\mathbb{N}})^k$ is projective if and only if it is obtained by a Borel subset of $(2^{\mathbb{N}})^m$
by successive applications of the operations of projection on one coordinate and complementation.
\end{remark}

\begin{definition}
Given $a\in H_{\omega_1}$, $r\in 2^{\mathbb{N}}$ codes $a$, if (modulo a recursive bijection
of $\mathbb{N}$ with $\mathbb{N}^2$), $r$ codes a well-founded extensional relation on 
$\mathbb{N}$
whose transitive collapse is the transitive closure of $\{a\}$.

\begin{itemize}
\item
 $\mathrm{Cod}:2^{\mathbb{N}}\to H_{\omega_1}$ is the map assigning $a$ to $r$ if and only if $r$ codes $a$
and assigning the emptyset to $r$ otherwise.
\item
$\mathrm{WFE}$ is the set of $r\in 2^{\mathbb{N}}$ which (modulo a recursive bijection
of $\mathbb{N}$ with $\mathbb{N}^2$) are a well founded extensional relation.
\end{itemize}
\end{definition}

The following are well known facts\footnote{See \cite[Section 25]{JECH} and in particular the statement and proof of Lemma 25.25, which contains all ideas on which one can elaborate to draw the conclusions below.}.
\begin{remark} 
The map $\mathrm{Cod}$ is defined by a $\ZFC^-$-provably $\Delta_1$-property  (with no parameters)
over $H_{\omega_1}$and is surjective. Moreover $\mathrm{WFE}$ is a $\Pi^1_1$-subset of $2^{\mathbb{N}}$. 
Therefore if $N\in H_{\omega_1}$ is a transitive model of $ZFC$, 
$N$ computes correctly $\mathrm{Cod}$ and $\mathrm{WFE}$, i.e. $\mathrm{Cod}^N=\mathrm{Cod}\cap N$ and 
$\mathrm{WFE}^N=\mathrm{WFE}\cap N$.
\end{remark}

\begin{lemma}
Assume $A\subseteq 2^{\mathbb{N}}$ is $\Sigma^1_{n+1}$. Then
$A$ is  $\Sigma_{n}$-definable in $H_{\omega_1}$ in the language $\tau_{\ST}$.\qed
\end{lemma}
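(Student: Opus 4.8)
The plan is to prove, by simultaneous induction on $n \geq 1$, the stronger statement that every $\Sigma^1_{n+1}$ subset of $(2^{\mathbb{N}})^k$ is $\Sigma_n$-definable over $H_{\omega_1}$ in $\mathcal{L}^*$ (allowing real parameters) and every $\Pi^1_{n+1}$ subset is $\Pi_n$-definable. The underlying dictionary is the following. Since $\omega \in H_{\omega_1}$, every number quantifier $\exists m \in \omega$ or $\forall m \in \omega$ is bounded, so any arithmetic matrix $\psi(\vec{x},\vec{X})$ is equivalent over $H_{\omega_1}$ to a $\Delta_0$-formula in the parameter $\omega$, hence to an atomic $\mathcal{L}^*$-predicate. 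On the other hand a real $X$ is itself an element of $H_{\omega_1}$ and ``$X \in 2^{\mathbb{N}}$'' is $\Delta_0$, so each second-order quantifier $\exists X \in 2^{\mathbb{N}}$ (resp.\ $\forall X \in 2^{\mathbb{N}}$) translates into a single unbounded quantifier of $H_{\omega_1}$ carrying the $\Delta_0$ side condition ``$X \in 2^{\mathbb{N}}$''. Counted naively this would cost $n+1$ alternations for a $\Sigma^1_{n+1}$ set, so the whole content of the Lemma is that exactly one alternation can be absorbed.

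That absorption is the base case $n=1$, and it is where the shift by one originates. Write a $\Sigma^1_2$ set as $\vec{r} \in A \iff \exists X\, \forall Y\, \psi(\vec{r},X,Y)$ with $\psi$ arithmetic. By the $\Pi^1_1$ normal form the inner clause $\forall Y\, \psi(\vec{r},X,Y)$ is equivalent to ``the tree $T_{\vec{r},X} \subseteq \omega^{<\omega}$ is well-founded'', where the relation $s \in T_{\vec{r},X}$ is recursive in $(\vec{r},X)$ and therefore $\Delta_0$ over $H_{\omega_1}$ in the parameter $\omega$. Now I exploit that inside $H_{\omega_1}$ well-foundedness of a countable tree is witnessed by a rank function: $T_{\vec{r},X}$ is well-founded iff $\exists \rho\,[\rho$ is an ordinal-valued rank function on $T_{\vec{r},X}]$, the bracketed clause being a local condition on the countable objects $\rho, T_{\vec{r},X}$ and hence $\Delta_0$ (and a well-founded countable tree admits such a $\rho$ with countable range, so $\rho \in H_{\omega_1}$). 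Thus the $\Pi^1_1$ matrix is already $\Sigma_1$ over $H_{\omega_1}$, and prepending $\exists X$ gives $\vec{r} \in A \iff \exists X\, \exists \rho\,(\Delta_0)$, which merges into a single $\Sigma_1$-formula. Complementation yields $\Pi^1_2 \to \Pi_1$, completing the base case.

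For the inductive step, fix $n \geq 2$ and assume the statement at level $n-1$. A $\Sigma^1_{n+1}$ set has the form $\vec{r} \in A \iff \exists X\, B(\vec{r},X)$ with $B$ a $\Pi^1_n = \Pi^1_{(n-1)+1}$ predicate of the pair $(\vec{r},X)$; by the induction hypothesis $B$ is $\Pi_{n-1}$-definable over $H_{\omega_1}$. Prepending the single unbounded quantifier $\exists X$ (with the $\Delta_0$ side condition ``$X \in 2^{\mathbb{N}}$'') to a $\Pi_{n-1}$-formula, whose leading block is universal since $n-1 \geq 1$, produces a genuine $\Sigma_n$-formula over $H_{\omega_1}$; the $\Pi$-case follows again by complementation. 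Real parameters of $A$ are simply retained as parameters in $H_{\omega_1}$, and taking $k=1$ gives the statement of the Lemma.

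The main obstacle is precisely the base case, i.e.\ the collapse $\Pi^1_1 \to \Sigma_1$ that produces the dimension shift: it relies essentially on the rank-function characterization of well-foundedness being available inside $H_{\omega_1}$, together with the $\Pi^1_1$ normal form, whereas the inductive step is routine quantifier bookkeeping. I would also stress the role of $\mathcal{L}^*$: it is because $\mathcal{L}^*$ supplies atomic predicates for all bounded $\in$-formulas that the arithmetic matrices and the local ``rank function'' clauses count as quantifier-free, making the quantifier counts above exact and landing the classification in $\Sigma_n$ rather than one level higher. (Accordingly the induction must start at $n=1$; the border reading $n=0$ would demand $\Sigma^1_1 \to \Sigma_0$, which fails already for a properly analytic set such as the complement of $\mathrm{WFE}$.)
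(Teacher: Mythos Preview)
Your argument is correct. The paper itself supplies no proof: the lemma is marked with a terminal \qed\ and the surrounding remark simply points to \cite[Section~25, Lemma~25.25]{JECH} for the underlying ideas. Your proof is precisely the standard one found there---the key ingredient being the collapse $\Pi^1_1\to\Sigma_1$ over $H_{\omega_1}$ via the $\Pi^1_1$ normal form and the $\Sigma_1$-expressibility of well-foundedness through rank functions, after which the induction is routine quantifier bookkeeping. Your explicit flagging of the degenerate reading $n=0$ is a useful addition and accurately identifies where the shift-by-one originates.
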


\begin{lemma}
Assume $A$ is $\Sigma_n$-definable in $H_{\omega_1}$ in the language $\tau_{\ST}$. Then
$A=\textrm{Cod}[\textrm{Cod}^{-1}[A]]$, and $\textrm{Cod}^{-1}[A]$ is
$\Sigma^1_{n+1}$.
\end{lemma}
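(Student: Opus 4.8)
The plan is to read this as the converse of the preceding lemma, so that the two together give the standard translation between the Levy hierarchy over $H_{\omega_1}$ and the projective hierarchy on $2^{\mathbb{N}}$ through the coding map, in the spirit of \cite[Lemma 25.25]{JECH}. The saturation identity is purely formal and I would dispose of it first: since $\mathrm{Cod}$ is surjective onto $H_{\omega_1}$ (as recorded in the Remark), one has $\mathrm{Cod}[\mathrm{Cod}^{-1}[B]]=B$ for every $B\subseteq H_{\omega_1}$, while $A$, being defined by evaluating a formula at the coded set, is invariant under passing between two reals coding the same element; the displayed equation is just the combination of these two observations. Hence the entire content of the lemma is the complexity bound.

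For the bound I would argue by induction on $n$, carried simultaneously with the dual statement that a $\Pi_n$-definition yields a $\Pi^1_{n+1}$ set. The engine is the surjectivity of $\mathrm{Cod}$, which lets me replace each unbounded $\mathcal{L}^*$-quantifier over $H_{\omega_1}$ by a quantifier over reals ranging over all of $2^{\mathbb{N}}$: since reals off $\mathrm{WFE}$ are sent to $\emptyset$, $H_{\omega_1}\models\exists x\,\chi(x,\vec a)$ is equivalent to $\exists s\,\chi(\mathrm{Cod}(s),\vec a)$, and dually for $\forall$, with no restriction of the quantifier to the $\Pi^1_1$-set $\mathrm{WFE}$. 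A $\Sigma_n$-definition $\exists x_1\forall x_2\cdots Q_n x_n\,\psi(\vec x)$ with $\psi$ an $\mathcal{L}^*$-quantifier-free (hence $\Delta_0$) matrix thus becomes $n$ alternating blocks of real quantifiers applied to the translated matrix $\psi^{*}$, and it remains to locate $\psi^{*}$ in the projective hierarchy.

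This is the crux of the argument. I would push all negations in $\psi$ onto its atoms, so that after translation $\psi^{*}$ is a positive Boolean combination, closed under number quantifiers (these arising from the bounded quantifiers of $\psi$, realized arithmetically via extraction of sub-codes), of the two coded atomic relations $\mathrm{Cod}(s)\in\mathrm{Cod}(t)$ and $\mathrm{Cod}(s)=\mathrm{Cod}(t)$, their negations, and the membership conditions in $\mathrm{WFE}$ forced by the $\emptyset$-convention. The coded relations are $\Sigma^1_1$ (the equality asserting the existence of an isomorphism of the coded well-founded extensional structures) and $\mathrm{WFE}$ is $\Pi^1_1$; since $\Sigma^1_1\cup\Pi^1_1\subseteq\Delta^1_2$ and $\Delta^1_2$ is closed under finite Boolean operations and number quantification, $\psi^{*}$ lands in $\Delta^1_2$. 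Prepending the $n$ alternating real-quantifier blocks and putting the result in prenex normal form — using whichever of the $\Sigma^1_2$ or $\Pi^1_2$ presentation of $\psi^{*}$ matches the innermost block, so that its leading quantifier merges and only its second contributes a fresh alternation — yields exactly $\Sigma^1_{n+1}$ in either parity of $n$; the inductive step then prepends one further block and invokes the dual hypothesis.

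The one genuine obstacle is precisely this final count, and in particular the fact that the matrix cannot in general be pushed below $\Delta^1_2$: the $\Pi^1_1$ well-foundedness requirement and the $\Sigma^1_1$ isomorphism requirement do not collapse to a common $\Delta^1_1$ level, and this irreducible extra projective level of $\psi^{*}$ is exactly what accounts for the shift from $\Sigma_n$ to $\Sigma^1_{n+1}$ rather than to $\Sigma^1_n$. I would make this explicit with the example of the $\Delta_0$-property $a\neq\emptyset$, whose pre-image $\{s:\mathrm{Cod}(s)\neq\emptyset\}$ is already properly $\Pi^1_1$, confirming that the matrix genuinely sits one projective level above the arithmetical and that $\Sigma^1_{n+1}$ is the honest bound.
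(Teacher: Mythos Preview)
The paper does not actually supply a proof of this lemma: it (together with the companion lemma just before it) is stated as a well-known fact, with a footnote pointing to \cite[Section~25, Lemma~25.25]{JECH} as containing ``all ideas on which one can elaborate to draw the conclusions below.'' Your argument is precisely the standard elaboration of those ideas---replace each unbounded $H_{\omega_1}$-quantifier by a real quantifier via the surjectivity of $\mathrm{Cod}$, observe that the translated $\Delta_0$-matrix lands in $\Delta^1_2$ because the coded atomic relations and the $\mathrm{WFE}$ membership condition live in $\Sigma^1_1\cup\Pi^1_1$, and then count alternations---so there is nothing to contrast. Your quantifier count is correct, and your observation that the inserted real quantifiers need not be restricted to $\mathrm{WFE}$ (surjectivity alone suffices) is a clean way to avoid an extra $\Pi^1_1$ condition at each step.

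One small remark: your treatment of the identity $A=\mathrm{Cod}^{-1}[\mathrm{Cod}[A]]$ is slightly tangled, but the fault lies with the paper's statement, which does not type-check as written (with $\mathrm{Cod}:2^{\mathbb{N}}\to H_{\omega_1}$ and $A\subseteq H_{\omega_1}$, the expression $\mathrm{Cod}[A]$ is ill-formed). The evident intended reading is that $\mathrm{Cod}^{-1}[A]$ is $\Sigma^1_{n+1}$ and that $A=\mathrm{Cod}[\mathrm{Cod}^{-1}[A]]$, the latter being immediate from surjectivity alone; your invocation of ``invariance under passing between two reals coding the same element'' is then not needed.
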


We can now easily conclude the following:
\begin{theorem}\label{thm:nonmodcompHomega1}
The $\tau_{\ST}$-theory 
of $H_{\omega_1}$ is not model complete.
\end{theorem}
\begin{proof}
For all $n$ there is some $A_n\in \Sigma^1_{n+1}\setminus\Pi^1_n$ (see for a proof \cite[Thm. 22.4]{kechris:descriptive}).
Therefore $A_2$ is $\Sigma_2$-definable but not $\Pi_1$-definable in $H_{\omega_1}$.
Consequently, Robinson test fails, and $T$ is not model complete.
\end{proof}

\section{Model completeness for set theory with predicates for universally Baire sets}\label{sec:modcompanUBpred}

\begin{definition}
Let $(V,\in)$ be a model of $\ZFC$ and $N\subseteq V$ be a transitive class (or set) which is a model of 
$\ZFC^-$.
$\mathcal{A}\subseteq$ is $N$-closed
if whenever
$B\subseteq (2^\omega)^k$ is such that for some 
$\in$-formula $\phi(x_0,\dots,x_n)$
\[
B=\bp{(r_0,\dots,r_{k-1})\in (2^\omega)^k:\, (N,\in,A_0,\dots,A_{n-k})\models\phi(r_0,\dots,r_{k-1},A_0,\dots,A_{n-k})}
\]
with $A_0,\dots,A_{n-k}\in\mathcal{A}$, we have that $B\in\mathcal{A}$.
\end{definition}

\begin{theorem}\label{thm:modcompanHomega1}
Let $(V,\in)$ be a model of 
$\ZFC$, 
and assume
$\mathcal{A}$ is $H_{\omega_1}$-closed.

Let $\tau_{\mathcal{A}}=\tau_{\ST}\cup\mathcal{A}$.
Then the $\tau_{\mathcal{A}}$-theory of $H_{\omega_1}$ 
is model complete and is the
model companion of the $\tau_{\mathcal{A}}$-theory of $V$.
\end{theorem}

The proof is rather straightforward but needs a slight generalization of Levy's absoluteness theorem which we
state and prove rightaway:

\begin{lemma}\label{lem:levyabsHkappa+}
Let $\kappa$ be an infinite  cardinal
and $\mathcal{A}$ be any family of subsets of 
$\bigcup_{n\in\omega}\pow{\kappa}^n$.
Let $\tau_{\mathcal{A}}=\tau_{\ST}\cup\mathcal{A}$.

Then:
\[
(H_{\kappa^+}^V,\tau_{\mathcal{A}}^V)\prec_{\Sigma_1}
(V,\tau_{\mathcal{A}}^{V}).
\]
\end{lemma}

\begin{proof}
Assume for some $\tau_{\mathcal{A}}$-formula
$\phi(\vec{x},y)$ without quantifiers\footnote{A quantifier free $\tau_{A_1,\dots,A_k}$-formula is a boolean
combination of atomic $\tau_{\ST}$-formulae with formulae of type $A_j(\vec{x})$.
For example $\exists x\in y A(y)$ is not a quantifier free $\tau_{\ST}$-formula, and is actually equivalent to the 
$\Sigma_1$-formula
$\exists x(x\in y)\wedge A(y)$.}
and 
$\vec{a}\in H_{\kappa^+}$
\[
(V,\tau_{\mathcal{A}}^{V})\models\exists y\phi(\vec{a},y).
\]
Let $\alpha>\kappa$ be large enough  so that for some $b\in V_\alpha$
\[
(V,\tau_{\mathcal{A}}^{V})\models\phi(\vec{a},b).
\]
Then
\[
(V_\alpha,\tau_{\mathcal{A}}^{V})\models\phi(\vec{a},b).
\]
Let $A_1,\dots,A_k$ be the subsets of $\pow{\kappa}^{i_k}$ which are the predicates mentioned in $\phi$. 
By the downward Lowenheim-Skolem theorem, we  can find
$X\subseteq V_\alpha$ which is the domain of a $\tau_{A_1,\dots,A_k}$-elementary substructure of
\[
(V_\alpha,\tau_{\ST},A_1,\dots,A_k)
\]
such that $X$ is a set of size $\kappa$ containing $\kappa$ and such that
$A_1,\dots,A_k,\kappa,b,\vec{a}\in X$. 
Since $|X|=\kappa\subseteq X$, a standard argument shows that 
$H_{\kappa^+}\cap X$ is a transitive set, and that $\kappa^+$ is the least ordinal in
$X$ which is not contained in $X$.
Let $M$ be the transitive collapse of $X$ via the Mostowski collapsing map $\pi_X$.

We have that
the first ordinal moved by $\pi_X$ is $\kappa^+$ and
$\pi_X$ is the identity on $H_{\kappa^+}\cap X$. Therefore $\pi_X(a)=a$
for all 
$a \in H_{\kappa^+}\cap X$.
Moreover for $A\subseteq \pow{\kappa}^n$ in $X$
\begin{equation}\label{eqn:piXidonpowkappa}
\pi_X(A)=A\cap M.
\end{equation}
We prove equation (\ref{eqn:piXidonpowkappa}):
\begin{proof}
Since 
$X\cap V_{\kappa+1}\subseteq X\cap H_{\kappa^+}$,
$\pi_X$ is 
the identity on $X\cap H_{\kappa^+}$, and
$A\subseteq \pow{\kappa}\subseteq V_{\kappa+1}$,
we get that
\[
\pi_X(A)=\pi_X[A\cap X]=\pi_X[A\cap X\cap V_{\kappa+1}]=A\cap M\cap V_{\kappa+1}=A\cap M.
\]
\end{proof}
It suffices now to show that
\begin{equation}\label{eqn:keyeqlevabs}
(M,\tau_{\ST}^V,\pi_X(A_1),\dots,\pi_X(A_k))\sqsubseteq (H_{\kappa^+},\tau_{\ST}^V,A_1,\dots,A_k).
\end{equation}
Assume \ref{eqn:keyeqlevabs} holds; since $\pi_X$ is an isomorphism and $\pi_X(A_j)=\pi_X[A_j\cap X]$, we
get that 
\[
(M,\tau_{\ST}^V,\pi_X(A_1),\dots,\pi_X(A_k))\models\phi(\pi_X(b),\vec{a})
\]
since 
\[
(X,\tau_{\ST}^V,A_1\cap X,\dots,A_k\cap X)\models\phi(b,\vec{a}).
\]
By (\ref{eqn:keyeqlevabs}) we get that 
\[
(H_{\kappa^+},\tau_{\ST}^V,\pi_X(A_1),\dots,\pi_X(A_k))\models\phi(\pi_X(b),\vec{a})
\]
and we are done.

We prove (\ref{eqn:keyeqlevabs}):
since $M$ is transitive, any atomic $\tau_{\ST}$-formula (i.e. any $\Delta_0$-property)
holds true in $M$ if and only if it holds in $H_{\kappa^+}$.
It remains to argue that the same occurs for the $\tau_{\mathcal{A}}$-formulae of type $A_j(x)$, i.e. that
$A_j\cap M=\pi_X(A_j)$ for all $j=1,\dots,n$; which is the case
by (\ref{eqn:piXidonpowkappa}).
\end{proof}

\begin{remark}
Key to the proof is the fact that subsets of $\kappa$
have bounded rank below $\kappa^+$.
If $A\subseteq H_{\kappa^+}$ has elements of 
unbounded rank, the equality $\pi_X(A)=A\cap M$ 
may fail: for example
if $A=H_{\kappa^+}$, $\pi_X(A)=H_{\kappa^+}\cap X$
while $A\cap M=M$.
This shows that (\ref{eqn:keyeqlevabs}) fails for this choice 
of $A$.
\end{remark}

We can now prove Theorem \ref{thm:modcompanHomega1}.

\begin{proof}
Let $T$ be the $\tau_{\mathcal{A}}$-theory of $V$ and $T^*$ be the 
$\tau_{\mathcal{A}}$-theory of $H_{\omega_1}$.

By the version of Levy's absoluteness Lemma we just proved
\[
(H_{\omega_1},\tau_{\mathcal{A}}^V)\prec_1(V,\tau_{\mathcal{A}}^V),
\]
hence the two structures share the same $\Pi_1$-theory.
Therefore (by the standard characterization of model companionship) 
it suffices to prove that $T^*$ is model complete.

By Robinson's test, 
it suffices to show that any existential $\tau_{\mathcal{A}}$-formula is 
$T^*$-equivalent to a universal $\tau_{\mathcal{A}}$-formula.

Let $A_1,\dots,A_k$ be the predicates in $\mathcal{A}$ appearing in $\phi$.

Let
\[
B=\bp{(r_1,\dots,r_n)\in (2^\omega)^n: 
(H_{\omega_1},\tau_{\ST}^V,A_1,\dots,A_k)\models \phi(\Cod(r_1),\dots,\Cod_\omega(r_n))}.
\]
Then $B$ belongs to $\mathcal{A}$, since $\mathcal{A}$ is  $H_{\omega_1}$-closed. 
Now for any $a_1,\dots,a_n\in H_{\omega_1}$:
\[
(H_{\omega_1},\tau_{\mathcal{A}}^V)\models \phi(a_1,\dots,a_n)
\]
\center{ if and only if }
\[
\forall r_1\dots r_n \qp{(\bigwedge_{i=1}^n\Cod(r_i)=a_i)\rightarrow B(r_1,\dots,r_n)}.
\]



This yields that
\[
T^*\vdash 
\forall x_1,\dots,x_n\,\qp{(\phi(x_1,\dots,x_n)\leftrightarrow\theta_\phi(x_1,\dots,x_n))}.
\]
where $\theta_\phi(x_1,\dots,x_n)$ is the $\Pi_1$-formula in the predicate $B\in\mathcal{A}$
\[
\forall y_1,\dots,y_n\,[(\bigwedge_{i=1}^n x_i=\Cod(y_i))\rightarrow B(y_1,\dots,y_n)].
\]
\end{proof}

Theorem~\ref{thm:modcompanHomega1} has a rather straightforward proof which 
amounts to a (slightly) disguised addition of atomic predicates (i.e. those representing the elements of $\mathcal{A}$)
which interpret
the definable subsets of $H_{\omega_1}$. But the point we want to make is that assuming large cardinals 
the universally Baire sets give a very large sample of projectively closed families which are quite ``simple'', 
hence it is natural to consider elements of these families
as atomic predicates. 

Given a topological space $(X,\tau)$, $A\subseteq X$ is nowhere dense if its closure has a dense complement,
meager if it is the countable union of nowhere dense sets, with the Baire property if it 
has meager symmetric difference with
an open set.

\begin{definition}
(Feng, Magidor, Woodin) $A\subseteq (2^{\mathbb{N}})^k$ is \emph{universally Baire} 
if for every compact Hausdorff space $X$ and
every continuous $f:X\to (2^{\mathbb{N}})^k$ we have that $f^{-1}[A]$ has the Baire property in $X$.

$\bool{UB}$ denotes the family of universally Baire subsets of $(2^{\mathbb{N}})^k$ for some $k$.
\end{definition}

\begin{example}
Given a model $(V,\in)$ of $\ZFC+$\emph{there are class many Woodin cardinals},
simple examples of $H_{\omega_1}$-closed families are:
\begin{enumerate}
\item
All subsets of $\pow{\omega^k}$ for $k\in\mathbb{N}$ (this is trivially true with no large cardinal assumptions).
\item
$\UB^V$, i.e. the family of \emph{all} universally Baire sets of $V$ 
\cite[Thm. 3.3.3, Thm. 3.3.5, Thm. 3.3.6, Thm. 3.3.13, Thm. 3.3.14]{STATLARSON}.
\item
The family of subsets of $\pow{\omega^k}$ for $k\in\mathbb{N}$ definable in $(L(\mathbb{R}),\in)$
among which the projective sets 
\cite[Thm. 3.3.3, Thm. 3.3.5, Thm. 3.3.6, Thm. 3.3.9, Thm. 3.3.13, Thm. 3.3.14]{STATLARSON}.
\item The family $\UB\cap X$ for some $X\prec V_\theta$ with $\theta>2^\omega$.
\end{enumerate}
\end{example}

\begin{corollary}\label{thm:mainthm1}
Assume $(V,\in)$ models 
\[
\ZFC+\text{there are class many Woodins which are inaccessible limit of Woodins}.
\]
Let $\psi$ be a 
$\Pi_2$-sentence for $\tau_{\UB}=\tau_{\ST}\cup\UB$. TFAE:
\begin{enumerate}
\item 
$\psi$ is in the $\tau_{\UB}$-theory of $H_{\omega_1}$.
\item
$\psi$ is in the model companion of the $\tau_{\UB}$-theory of $V$.
\item
$\psi$ is consistent with the universal
fragment of the $\tau_{\UB}$-theory of $V$.
\item
$V$ models that some partial order $P$ forces $\psi^{H_{\omega_1}}$.
\end{enumerate}
\end{corollary}
\begin{proof}
The equivalence of the first and fourth items follow from \cite[Cor. 3.1.10]{STATLARSON}.
The equivalence of the first and second item follows from Thm. \ref{thm:modcompanHomega1}.

The equivalence of the second and third item is a purely model theoretic fact, 
we give the details for the sake of completeness:

Assume $T$ is a complete theory admitting a model companion $T^*$.

Since $T^*$ is the model companion of $T$,
$T^*_\forall=T_\forall$. In particular if $\psi\in T^*$, $\psi$ is consistent with 
the universal fragment of $T$.

Conversely assume $\psi+T_\forall$ is consistent. We must show that $\psi\in T^*$.

Since $T$ is complete, $T^*$ is also complete:
If $T_1\supseteq T^*$ and $T_2\supseteq T^*$, we get
that $(T_i)_\forall\supseteq (T^*)_\forall=T_\forall$ for both $i$.
Since $T$ is complete we actually get that  $(T_i)_\forall=T_\forall$ for both $i$.
Now $T_i\supseteq T^*$ entails that both $T_i$ are model complete (every universal 
formula is $T^*$-equivalent ---hence also
$T_i$-equivalent--- to an existential formula). This gives that $T_1$ is the model companion of $T_2$,
hence we can find $\mathcal{M}_1\sqsubseteq\mathcal{M}_2$ with $\mathcal{M}_i$ a model of $T_i$. 
Since they are both models of $T$,
model completeness of $T$ entails that $\mathcal{M}_1\prec \mathcal{M}_2$; therefore $T_1=T_2=T^*$.

We get that $T^*$ is also the model companion
of $\psi+T_\forall$ (since $T^*_\forall=T_\forall=(T_\forall+\psi)_\forall$ and $T^*$ is model complete). 
Hence some model $\mathcal{M}$ of $T^*$ is a substructure of some model $\mathcal{N}$
of $T_\forall+\psi$.

Since $\psi$ is $\Pi_2$ and holds in $\mathcal{N}$, $\psi$ holds in $\mathcal{M}$ as well:
assume $\psi$ is $\forall \vec{x}\exists\vec{y}\phi(\vec{x},\vec{y})$ with $\phi$ quantifier free.
Let $\vec{a}\in\mathcal{M}$. Then there exists $\vec{b}\in\mathcal{N}^{<\omega}$ such that
\[
\mathcal{N}\models \phi(\vec{a},\vec{b}).
\]
Since $\mathcal{M}\prec_1\mathcal{N}$
such  a $\vec{b}$ can be found in $\mathcal{M}^{<\omega}$. We conclude that
\[
\mathcal{M}\models
\forall \vec{x}\exists\vec{y}\phi(\vec{x},\vec{y}).
\]
Hence $\psi$ in $T^*$, being consistent with it. And this concludes the proof.
\end{proof}

In particular the Corollary outlines that forcibility exhausts all means to produce
the consistency of a $\Pi_2$-sentence for $\tau_{\UB}$ with the 
universal fragment of the $\tau_{\UB}$-theory of $V$.


There are by now a variety of generic absoluteness results for the theory of $H_{\omega_2}$ assuming 
forcing axioms; for example the $\Pi_2$-maximality results for the theory of $H_{\aleph_2}$ assuming 
Woodin's axiom $(*)$ (see the monograph \cite{HSTLARSON}), or the second author's 
\cite{VIAASP,VIAAUD14,VIAMM+++,VIAMMREV,VIAUSAX} regarding the invariance of the first order theory of 
$H_{\aleph_2}$ under stationary set preserving forcings which preserve strong forcing axioms.

In follow-ups of this paper we will show that also these generic absoluteness results 
are strictly intertwined with model companionship (see \cite{VIATAMSTI,VIATAMSTII}).

\bibliographystyle{plain}
	\bibliography{Biblio}

\begin{thebibliography}{10}

\bibitem{VIAASP}
David Asper\'o and Matteo Viale.
\newblock Category forcings.
\newblock In preparation, 2019.

\bibitem{VIAAUDSTEBOOK}
Giorgio Audrito, Raph\"ael Carroy, Silvia Steila, and Matteo Viale.
\newblock Iterated forcing, category forcings, generic ultrapowers, generic
  absoluteness.
\newblock Book in preparation, 2017.

\bibitem{VIAAUD14}
Giorgio Audrito and Matteo Viale.
\newblock Absoluteness via resurrection.
\newblock {\em J. Math. Log.}, 17(2):1750005, 36, 2017.

\bibitem{BELLSTBVM}
J.~L. Bell.
\newblock {\em Set theory: boolean-valued models and independence proofs}.
\newblock Oxford University Press, Oxford, 2005.

\bibitem{Hir}
J.~Hirschfeld.
\newblock The model companion of {ZF}.
\newblock {\em Proceedings of the American Mathematical Society},
  50(1):369--374, 1975.

\bibitem{HIRWHE75}
J.~Hirschfeld and W.~H. Wheeler.
\newblock {\em Forcing, Arithmetic, Division Rings}.
\newblock Springer, 1975.

\bibitem{JECH}
T.~Jech.
\newblock {\em Set theory}.
\newblock Springer Monographs in Mathematics. Springer, Ber\-lin, 2003.
\newblock The third millennium edition, revised and expanded.

\bibitem{kechris:descriptive}
Alexander~S. Kechris.
\newblock {\em Classical descriptive set theory}, volume 156 of {\em Graduate
  Texts in Mathematics}.
\newblock Springer-Verlag, New York, 1995.

\bibitem{HSTLARSON}
P.~B. Larson.
\newblock Forcing over models of determinacy.
\newblock In {\em Handbook of set theory. {V}ols. 1, 2, 3}, pages 2121--2177.
  Springer, Dordrecht, 2010.

\bibitem{STATLARSON}
Paul~B. Larson.
\newblock {\em The stationary tower}, volume~32 of {\em University Lecture
  Series}.
\newblock American Mathematical Society, Providence, RI, 2004.
\newblock Notes on a course by W. Hugh Woodin.

\bibitem{MacCompl}
A.~Macintyre.
\newblock {Model completeness}.
\newblock In J.~Barwise, editor, {\em Handbook of Mathematical logic}, pages
  140--180. North Holland, 1977.

\bibitem{PIEROBONTHESIS}
Moreno Pierobon.
\newblock Saturated structures constructed using forcing and applications.
\newblock 2019.
\newblock Unpublished Master Thesis, University of Torino, 2019.

\bibitem{TENZIE}
K.~Tent and M.~Ziegler.
\newblock {\em {A course in model theory}}.
\newblock Cambridge University Press, 2012.

\bibitem{VenturiRobinson}
G.~Venturi.
\newblock Infinite forcing and the generic multiverse.
\newblock {\em Studia Logica}, pages 587--608, 2019.

\bibitem{VIAMM+++}
Matteo Viale.
\newblock Category forcings, {$MM^{+++}$}, and generic absoluteness for the
  theory of strong forcing axioms.
\newblock {\em J. Amer. Math. Soc.}, 29(3):675--728, 2016.

\bibitem{VIAMMREV}
Matteo Viale.
\newblock Martin's maximum revisited.
\newblock {\em Arch. Math. Logic}, 55(1-2):295--317, 2016.

\bibitem{VIAUSAX}
Matteo Viale.
\newblock Useful axioms.
\newblock To appear in a special issue of ``IfCoLog Journal of Logics and their
  Applications'' dedicated to the winners of Kurt G\"odel fellowships, 2016.

\bibitem{viale-notesonforcing}
Matteo Viale.
\newblock Notes on forcing.
\newblock Available on author's webpage, 2017.

\bibitem{VIATAMSTI}
Matteo Viale.
\newblock Tameness for set theory $i$.
\newblock arXiv:2003.07114, 2020.

\bibitem{VIATAMSTII}
Matteo Viale.
\newblock Tameness for set theory $ii$.
\newblock arXiv:2003.07120, 2020.

\bibitem{VIAAUDSTE13}
Matteo Viale, Giorgio Audrito, and Silvia Steila.
\newblock A boolean algebraic approach to semiproper iterations.
\newblock arXiv:1402.1714, 2014.

\bibitem{VIAPAR}
Matteo Viale and Francesco Parente.
\newblock Universality properties of forcing.
\newblock In preparation, 2019.

\end{thebibliography}
\end{document}